\crefname{theorem}{Theorem}{Theorems}
\crefname{lemma}{Lemma}{Lemmas}
\crefname{claim}{Claim}{Claims}
\crefname{prop}{Proposition}{Propositions}
\newtheorem*{rep@theorem}{\rep@title}
\newcommand{\newreptheorem}[2]{%
\newenvironment{rep#1}[1]{%
 \def\rep@title{#2~\ref{##1}}%
 \begin{rep@theorem}}%
 {\end{rep@theorem}}}
\theoremstyle{plain}
\newtheorem{theorem}{Theorem}[section]
\newtheorem{lemma}[theorem]{Lemma}
\newtheorem{prop}[theorem]{Proposition}
\theoremstyle{definition}
\newtheorem{definition}[theorem]{Definition}
\theoremstyle{definition}
\theoremstyle{remark}
\numberwithin{equation}{section}
\newcommand{\FF}{\mathbb{F}}
\newcommand{\F}{\mathbb{F}}
\newcommand{\NN}{\mathbb{N}}
\newcommand{\N}{\mathbb{N}}
\newcommand{\ZZ}{\mathbb{Z}}
\DeclareMathOperator{\Eval}{Eval}
\newcommand{\Esymb}{{\bf E}}
\DeclareMathOperator*{\E}{\Esymb}
\DeclareMathOperator{\Cov}{Cov}
\title{Iterated sumset expansion in $\mathbb{F}_p^n$}
\author{Manik Dhar \thanks{Massachusetts Institute of Technology. \texttt{dmanik@mit.edu}} \and Sammy Luo 
\thanks{Massachusetts Institute of Technology. Research supported by NSF Award No. 2303290. \texttt{sammyluo@mit.edu}}
}
\begin{document}

\maketitle

\begin{abstract}
    Given a set $A \subseteq \mathbb{F}_p^n$, what conditions does one need to guarantee that iterated sumsets of the form $A+\cdots+A$ expand quickly (say, within $O(p)$ terms) to the whole space? When only the size of $A$ is known, such expansion results are only possible when $|A|>\frac{1}{p}|\mathbb{F}_p^n|$. However, heuristic considerations suggest that expansion should begin with much smaller sets under just mild ``nondegeneracy'' conditions. In this paper, we confirm this intuition by showing a sufficient algebraic condition for the asymmetric version of this problem: We have $A_1+\dots+A_m=\mathbb{F}_p^n$ as long as each $A_i$ is not contained in the zero set of any low degree polynomial ($\deg = O(n)$ when $m=O(p)$). We close with a discussion of the behavior of random sets, as well as extensions of these results and connections with the Erd\H{o}s-Ginzburg-Ziv problem. Our proofs make use of the shift operator polynomial method developed by the second author.
\end{abstract}

\section{Introduction}

\label{sec:intro}

For subsets $A,B$ of an abelian group $G$, their sumset is defined by $A+B:=\{a+b:\: a\in A,\, b\in B\}$. This definition extends naturally to iterated sumsets of the form $A_1+\cdots+A_m$. Many of the central questions and results in additive combinatorics revolve around the study of the size of a sumset given information about the size or structure of the summands.

In this paper, we study the following question: Given subsets $A_1,\dots,A_m$ of a vector space $V=\FF^n$ over a finite field $\FF=\FF_q$, under what circumstances can we guarantee that
\[
A_1+\cdots+A_m=\FF_q^n,
\]
i.e. the sumset expands to the whole space?

A simple version of this question was posed by Adam Chapman on MathOverflow \cite{chapman2023}, in the case where $q=p$ is prime, $m=p-1$, $A:=A_1=\cdots=A_m$, and the only information assumed about $A$ is its size. An answer by Terry Tao points out a result of Bollob\'as and Leader \cite{BL1996} which implies that given the sizes of two sets $A$ and $B$ in $\F_p^n$, the size of their sumset is minimized when $A,B$ are initial segments in a lexicographic order on the coordinates. It follows that the condition $|A|\ge \frac{p^n-1}{p-1}+1$ is sufficient to imply $\underbrace{A+\cdots+A}_{p-1\text{ times}} = \FF_p^n$. This bound is tight, as seen from choosing $A$ to be the set of all points $(x_1,\dots,x_n)\in \FF_p^n$ whose first nonzero coordinate equals $1$.

The highly structured nature of this tight example, however, makes it natural to question whether the sumset expansion behavior we seek starts to show up for much smaller sets, as long as some modest structural constraints are satisfied. Our main result shows that this is indeed the case: over $\FF_p$ for $p$ prime, it suffices to have the condition that none of the sets $A_i$ lie in a low degree hypersurface, i.e. the zero set of a low degree polynomial.

\begin{theorem}
    \label{thm:main}
    Let $p$ be a prime, and let $m, n_1,\dots,n_m$ be positive integers such that $n_1+\cdots+n_m\ge (p-1)n$. If $A_1,\dots,A_m\subseteq \FF_p^n$, and for $1\le i\le m$, $A_i$ is not contained in the zero set of any polynomial of degree $\le n_i$, then 
    \[
        A_1+\cdots+A_{m} = \FF_p^n.
    \]
\end{theorem}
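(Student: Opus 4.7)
The plan is to argue by contradiction via a polynomial identity. Suppose some $v \in \mathbb{F}_p^n$ does not lie in $A_1 + \cdots + A_m$, and consider
\[
P(y_1, \ldots, y_m) := \delta_v(y_1 + \cdots + y_m) = \prod_{j=1}^{n} \Bigl(1 - \bigl((y_1)_j + \cdots + (y_m)_j - v_j\bigr)^{p-1}\Bigr),
\]
which is a reduced polynomial in the $mn$ variables $(y_i)_j$ of total degree exactly $(p-1)n$, vanishing on all of $A_1 \times \cdots \times A_m$ by choice of $v$.

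The heart of the argument would be a generalized Combinatorial Nullstellensatz for polynomial-nondegenerate sets: \emph{if a reduced polynomial $P(y_1, \ldots, y_m)$ of total degree at most $\sum_i n_i$ vanishes on $A_1 \times \cdots \times A_m$, and each $A_i$ is not contained in the zero set of any polynomial of degree $\leq n_i$, then $P \equiv 0$.} Granting this, since our $P$ has degree $(p-1)n \leq \sum_i n_i$ and $P(v, 0, \ldots, 0) = 1 \neq 0$, we obtain the desired contradiction. I would aim to prove this Nullstellensatz by induction on $m$: the base case $m=1$ is immediate from the nondegeneracy of $A_1$, and the inductive step would proceed by writing $P = \sum_E y_m^E Q_E(y_1, \ldots, y_{m-1})$ and trying to show that each $Q_E$ vanishes on $A_1 \times \cdots \times A_{m-1}$ with an appropriately reduced degree budget.

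The main obstacle is the tension between total and individual-variable degrees: while $\deg P \leq \sum_i n_i$ is the hypothesis, the individual degree $\deg_{y_m} P$ can be as large as $(p-1)n$, far more than $n_m$. Consequently the naive induction breaks, since the nondegeneracy of $A_m$ cannot be applied directly to $P(y_1, \ldots, y_{m-1}, y_m)$ viewed as a polynomial in $y_m$. This is exactly the role of the shift operator polynomial method of the second author: the shift operators $S_a f(x) = f(x+a)$ preserve the degree filtration on $\mathbb{F}_p[y]^{\mathrm{red}}$, and one uses them to re-express $P$ in a form where each $y_i$ correctly ``pays'' its share $n_i$ of the total degree budget. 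An alternative (essentially equivalent) reformulation is the monotonicity $\pi(A + B) \geq \min\bigl(\pi(A) + \pi(B),\ (p-1)n\bigr)$, where $\pi(S) := \max\{d : \text{no nonzero polynomial of degree} \leq d \text{ vanishes on } S\}$; iterating from $\pi(\{0\}) = 0$ would give $\pi(A_1 + \cdots + A_m) \geq \sum_i n_i \geq (p-1)n$, forcing the sumset to be everything. Whichever formulation one adopts, the crux is identifying a nonvanishing ``witness'' coefficient in the expansion $Q(a+b) = \sum_{I,J} c_{I+J} \binom{I+J}{I,J} a^I b^J$ for a suitable splitting $I + J$ of a top-degree monomial of $Q$, which works because $\binom{p-1}{e_1, \ldots, e_m} \not\equiv 0 \pmod p$ whenever $\sum_i e_i = p-1$ (by Wilson's theorem, since $(p-1)! \equiv -1 \pmod p$).
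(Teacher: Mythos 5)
Your proposal correctly locates the algebraic heart of the argument -- the fact that $\binom{z}{\alpha^{(1)},\ldots,\alpha^{(m)}}\not\equiv 0\pmod p$ whenever $\sum_i\alpha^{(i)}=z$ with each coordinate of $z$ at most $p-1$ -- and correctly recognizes that shift operators are the mechanism for overcoming the total-degree vs.\ per-variable-degree tension. But it leaves the central lemma unproved, and the version you write down is actually false. Take $m=2$, $n=1$, $A_1=A_2=\{0,1\}$, $n_1=n_2=1$: the polynomial $y_1(y_1-1)$ is reduced, has total degree $2=n_1+n_2$, and vanishes on $A_1\times A_2$, yet is nonzero. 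So a ``generalized Combinatorial Nullstellensatz'' cannot hold for arbitrary $P$ of degree $\le\sum n_i$; it must exploit the special structure of $\delta_v(y_1+\cdots+y_m)$ as a polynomial of the sum. Once you restrict to that class, the statement becomes equivalent to the inequality $\pi(A_1+\cdots+A_m)\ge\min\bigl(\sum n_i,(p-1)n\bigr)$ in your alternative formulation -- which you also state without proof. (That inequality does turn out to be derivable from the paper's machinery, but it is not proved in the paper either, and it is not an obvious reformulation of what you assert.)

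The concrete gap, then, is in the sentence ``one uses [shift operators] to re-express $P$ in a form where each $y_i$ correctly `pays' its share $n_i$ of the total degree budget'': this is a description of a hoped-for outcome, not an argument. The paper's actual mechanism is different and more direct. It does not argue by contradiction with an indicator polynomial at all. Instead, for each $A_i$ it uses the hypothesis (via a full-rank evaluation matrix, \cref{lem-generalPositionHasse}) to produce, for every multi-index $\alpha$ with $|\alpha|\le n_i$, an element of $\Lambda_{A_i}$ whose lowest-degree Hasse-derivative component is exactly $H^{(\alpha)}$; multiplying one such operator from each $\Delta_{A_i}$ along a splitting $\sum_i\alpha^{(i)}=(p-1,\ldots,p-1)$ produces an element of $\Lambda_S$ (for $S=A_1+\cdots+A_m$) whose leading term is a nonzero multiple of $H^{(p-1,\ldots,p-1)}$, by your multinomial observation; finally, $\deg(S)=n(p-1)$ forces $S=\FF_p^n$ by the unique-max-degree property (\cref{lem:shift-properties}(c)). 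To turn your plan into a proof you would need to actually prove one of the two lemmas you assert (suitably corrected), and as you note the naive induction does not close; the rank argument via the evaluation matrix is the step you are missing.
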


In particular, in the symmetric setting studied in \cite{chapman2023}, we have the following result.

\begin{theorem}
    \label{thm:main-symm}
    If $A\subseteq \FF_p^n$, and $A$ is not contained in the zero set of a polynomial of degree $\le n$, then 
    \[
        \underbrace{A+\cdots+A}_{p-1\text{ times}} = \FF_p^n.
    \]
\end{theorem}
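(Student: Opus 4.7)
The plan is to derive the statement as an immediate specialization of \Cref{thm:main}. Specifically, I would take $m = p - 1$, set $A_1 = A_2 = \cdots = A_{p-1} = A$, and choose degree parameters $n_1 = n_2 = \cdots = n_{p-1} = n$. Under these choices the numerical hypothesis $n_1 + \cdots + n_m \ge (p-1) n$ of \Cref{thm:main} holds with equality, while the nondegeneracy hypothesis on each summand---namely that $A_i$ avoids the zero set of every polynomial of degree at most $n_i = n$---reduces to the single condition on $A$ imposed in the theorem at hand. Invoking \Cref{thm:main} then yields the desired conclusion $\underbrace{A+\cdots+A}_{p-1\text{ times}} = \FF_p^n$.

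Because this deduction is a formal specialization, there is no substantive obstacle once \Cref{thm:main} is in hand; all the real work sits in the asymmetric statement. Had I instead wanted to prove the symmetric version directly, the natural approach would be a polynomial method argument in the spirit of the Combinatorial Nullstellensatz: assuming toward contradiction that some $b \in \FF_p^n$ lies outside the $(p-1)$-fold sumset, one considers the polynomial
\[
f(x^{(1)}, \ldots, x^{(p-1)}) = \prod_{j=1}^n \left(1 - \left(x^{(1)}_j + \cdots + x^{(p-1)}_j - b_j\right)^{p-1}\right),
\]
of total degree exactly $(p-1)n$, which must then vanish identically on $A^{p-1}$. The main difficulty would be to leverage the nondegeneracy of $A$ to force a contradiction while ``spending'' at most $n$ degrees per copy of $A$; this per-summand absorption is presumably what the shift operator polynomial method of the second author, mentioned in the abstract, is designed to manage, and it is the mechanism that makes the sharper asymmetric statement \Cref{thm:main}---of which \Cref{thm:main-symm} is merely the diagonal case---tractable.
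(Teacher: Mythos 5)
Your specialization is exactly how the paper obtains \Cref{thm:main-symm}: it is stated immediately after \Cref{thm:main} as the diagonal case $m=p-1$, $A_i=A$, $n_i=n$, with no separate proof given. The deduction is correct and complete.
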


For large $p$, there exist sets $A\subseteq \FF_p^n$ that do not lie in the zero set of any polynomial of degree $\le n$ but have size as small as $\binom{2n}{n}+1$, which is much smaller than $\frac{p^n-1}{p-1}+1$. Nevertheless, this does not seem to be quite the sharpest possible condition to impose; the tight example given with $|A|=\frac{p^n-1}{p-1}$ suggests that some more ``linear-looking'' constraint might be possible. Our result below confirms this intuition in the two-dimensional symmetric case.

\begin{theorem}
    \label{thm:2d}
    Let $p>2$ be a prime. If $A\subseteq \FF_p^2$ contains a set of $4$ points, no three of which are collinear, then 
    \[\underbrace{A+\cdots+A}_{p-1\text{ times}} = \FF_p^2. \]
\end{theorem}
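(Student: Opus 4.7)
First, I would exploit that $(p-1)A = \FF_p^2$ is an affine-invariant property: an invertible affine map $x \mapsto Mx + v$ sends $(p-1)A$ to $M \cdot (p-1)A + (p-1)v$, which is all of $\FF_p^2$ iff $(p-1)A$ is. Since the affine group acts transitively on ordered triples of affinely independent points, I normalize three of our four points to $(0,0), (1,0), (0,1)$; the fourth becomes some $(a, b) \in \FF_p^2$, and the no-three-collinear hypothesis translates to $a, b, 1 - a - b \in \FF_p \setminus \{0\}$. Setting $B := \{(0,0), (1,0), (0,1), (a, b)\}$, it suffices to prove $(p-1)B = \FF_p^2$. Combinatorially, $(u, v) \in (p-1)B$ iff there exist $i, j, k \in \ZZ_{\ge 0}$ with $i + j + k \le p - 1$ and $(i + ka, j + kb) \equiv (u, v) \pmod p$; for each fixed $k$ the minimal $(i, j)$ are the standard residues, and any larger choice cannot fit the budget $p - 1$. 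So the goal becomes: for every $(u, v) \in \FF_p^2$, find $k \in \{0, \ldots, p-1\}$ with
\[
((u - ka) \bmod p) + ((v - kb) \bmod p) + k \le p - 1.
\]

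Next, I would apply the shift operator polynomial method in the group algebra $R := \FF_p[X, Y]/(X^p - 1, Y^p - 1) \cong \FF_p[\FF_p^2]$. Let $F_B = 1 + X + Y + X^a Y^b \in R$. The freshman's dream gives $F_B^p = 1 + X^p + Y^p + X^{ap} Y^{bp} = 4$ in $R$, which is nonzero in $\FF_p$ since $p > 2$; so $F_B$ is a unit with $F_B^{p-1} = 4 F_B^{-1}$. The coefficient of $X^u Y^v$ in $F_B^{p-1}$ equals, modulo $p$, the number of ways to write $(u, v)$ as a $(p-1)$-sum from $B$, so nonvanishing of this coefficient in $\FF_p$ forces the count to be positive. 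A direct expansion expresses the coefficient as $\sum_{l \in L(u,v)} \binom{p-1}{i_l, j_l, k_l, l}$, summed over the ``good'' values $l = k$ from the previous paragraph, with $(i_l, j_l, k_l)$ uniquely determined by $l$. Each multinomial is individually nonzero mod $p$ by Kummer's theorem (the four parts lie in $\{0, \ldots, p-1\}$ and sum to $p - 1 < p$, so no base-$p$ carries occur), so the question becomes whether these terms cancel.

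The main obstacle, and the heart of the proof, is to show that this signed sum is nonzero modulo $p$ for every $(u, v) \in \FF_p^2$. When $|L(u,v)| = 1$ the coefficient is automatically nonzero, but the principal difficulty is the case of multiple good $l$'s, where a priori cancellation must be ruled out. The three non-degeneracy conditions $a \ne 0$, $b \ne 0$, $1 - a - b \ne 0$ must play coordinated roles, corresponding precisely to the three ways $(a, b)$ can fail to be in general position with $\{(0,0), (1,0), (0,1)\}$, and each must be invoked (via the shift operator polynomial method of the second author, working in the local ring $\FF_p[X, Y]/((X-1)^p, (Y-1)^p)$) to rule out a corresponding family of potential cancellations. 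This delicate combinatorial step is what distinguishes the present argument from the weaker conclusion one could draw by applying \Cref{thm:main-symm} directly, since four points no three collinear always lie on a conic, so the hypothesis of \Cref{thm:main-symm} fails for $B$.
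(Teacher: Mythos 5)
Your normalization is correct, and you rightly observe that $A$ always lies on a conic so that \cref{thm:main-symm} cannot be applied directly; you have also correctly located the proof in the shift-operator/group-algebra framework (your $\FF_p[X,Y]/(X^p-1,Y^p-1)$ is the same ring as $\FF_p[X,Y]/((X-1)^p,(Y-1)^p)$, and the Hasse-derivative picture is just this ring written in the nilpotent generators $X-1$, $Y-1$). However, your proposal stops at precisely the point where the real work begins. The observation that $F_B^p = 4$ makes $F_B$ a unit is true but carries no information about the support of $F_B^{p-1}$: a unit can easily have an inverse with many zero coefficients. You then reduce the problem to showing that a signed sum of multinomial coefficients over the set $L(u,v)$ of valid shifts is nonzero mod $p$ for every $(u,v)$, call this ``the heart of the proof,'' and state that the three nondegeneracy conditions ``must be invoked \dots to rule out a corresponding family of potential cancellations'' --- but you never exhibit the mechanism that actually rules them out. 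That is a genuine gap, not a detail; as you yourself note, there can be several good $l$'s, and a priori their contributions could cancel.

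The paper closes this gap differently and more cleanly, by never computing actual coefficients of $F_B^{p-1}$ at all. It works with the \emph{filtration} of $\Lambda_A$ by lowest-degree Hasse components. Because $|A|=4$ and the three normalized points already account for $\Delta_A^0$ and $\Delta_A^1$, there is exactly one more dimension of leading terms, necessarily at degree~$2$, carried by
\[
\ell = T^{(a,b)} - aT^{(1,0)} - bT^{(0,1)} + (a+b-1)T^{(0,0)},\qquad
\ell_{(2)}=\tfrac{a^2-a}{2}(H^{(1,0)})^2 + ab\,H^{(1,0)}H^{(0,1)}+\tfrac{b^2-b}{2}(H^{(0,1)})^2.
\]
By additivity (\cref{lem:shift-properties}(b)), $\ell_{(2)}^{\,p-1}\in\Delta_{(p-1)A}$, and by the degree bound (\cref{lem:shift-properties}(c)) it suffices to show $\ell_{(2)}^{\,p-1}\neq 0$. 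Treating $\ell_{(2)}$ as a binary quadratic form in $H^{(1,0)},H^{(0,1)}$ and completing the square reduces nonvanishing of $\ell_{(2)}^{\,p-1}$ to nonvanishing of the discriminant, which evaluates exactly to $ab(a+b-1)$ --- precisely the product of the three linear constraints from the no-three-collinear hypothesis. This is the concrete, unified role of the three nondegeneracy conditions that your outline anticipated but did not produce. To complete your proposal you would need either to replicate this discriminant computation or to give an independent argument for why the multinomial sum over $L(u,v)$ never vanishes; as written, the proof is not there.
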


In the case of a \emph{random} set $B$ of points in $\F_p^n$, we show that $n+2$ points suffice with high probability as $p$ grows, using a simple argument that studies covariances under random affine maps.

\begin{theorem}
    \label{thm:prob}
    Let $c\in (0,1)$ and $n\in \ZZ_{>0}$. For every sufficiently large prime $p$, a uniformly random set $B$ of $n+2$ points in $\F_p^n$ satisfies
    \[
        \underbrace{B+\cdots+B}_{\lceil cp\rceil\text{ times}}=\F_p^n,
    \]
    with probability $1-o_p(1)$.
\end{theorem}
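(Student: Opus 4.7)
The plan is to split $k = \lceil cp\rceil$ into two nearly-equal halves $k_1 = \lfloor k/2\rfloor$ and $k_2 = \lceil k/2 \rceil$, both of size $\Theta(p)$ with $k_1 + k_2 = k$, and show that each of $|k_1 B|$ and $|k_2 B|$ exceeds $3p^n/4$ with probability $1-o_p(1)$. The standard pigeonhole fact that $|A|+|C|>|G|$ implies $A+C=G$ will then yield $k_1 B + k_2 B = \F_p^n$ with probability $1-o_p(1)$, and since $k_1 B + k_2 B \subseteq (k_1 + k_2) B = k B$, this gives $kB = \F_p^n$ as desired. For convenience I will work in the iid model where $b_1,\dots,b_{n+2}$ are iid uniform on $\F_p^n$; this differs from the uniform random subset model by an event of probability $O(1/p^n)$, which is absorbed into $o_p(1)$.

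Fix $k'\in\{k_1,k_2\}$; note $0<k'<p$ for large $p$ since $c<1$. For $v\in\F_p^n$, let $N_v$ denote the number of non-negative integer tuples $(n_1,\dots,n_{n+2})$ with $\sum_i n_i = k'$ and $\sum_i n_i b_i = v$, so that $\sum_v N_v = B := \binom{k'+n+1}{n+1} = \Theta(p^{n+1})$ and $|k'B| = |\{v : N_v > 0\}|$. The Cauchy-Schwarz inequality applied to $N_v = N_v\cdot \mathbf{1}[N_v>0]$ gives $|k'B| \geq B^2/\sum_v N_v^2$ pointwise. The key calculation is the second moment: for any admissible pair $(n,n')$ with $n\ne n'$, since every coordinate lies in $[0,k']\subseteq[0,p-1]$, the difference vector $(n_i - n'_i)_{i=1}^{n+2}$ is a non-zero vector in $\F_p^{n+2}$, and the random combination $\sum_i(n_i-n'_i)b_i$ is then a non-trivial $\F_p$-linear combination of iid uniform variables, hence uniform on $\F_p^n$. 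Therefore
\[
\EE\Bigl[\textstyle\sum_v N_v^2\Bigr] \;=\; B \;+\; \frac{B(B-1)}{p^n} \;=\; \frac{B^2}{p^n}\bigl(1+O(1/p)\bigr).
\]
Combining this with Jensen's inequality ($\EE[1/X]\ge 1/\EE[X]$ for $X>0$) and the pointwise lower bound gives $\EE[|k'B|] \ge B^2/\EE[\sum_v N_v^2] = p^n(1-O(1/p))$, and then Markov's inequality applied to $p^n - |k'B|$ yields $|k'B| > 3p^n/4$ with probability $1-O(1/p) = 1-o_p(1)$. A union bound over $k' \in \{k_1, k_2\}$ finishes the argument.

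The main subtlety I expect is in the second moment: one must verify that the assumption $0<k'<p$ rules out any non-trivial coefficient collisions modulo $p$, leaving only the diagonal $n=n'$ as the deviation from the ``independent'' value $B^2/p^n$. The fact that such collisions cannot occur is precisely the ``covariance under random affine maps'' ingredient alluded to in the introduction; once this has been verified, the remaining steps (Cauchy-Schwarz, Jensen, Markov, and the pigeonhole fact about abelian groups) are routine.
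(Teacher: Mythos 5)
Your proposal is correct, and it takes a genuinely different route from the paper's proof. The paper conditions on the event (holding with probability $1-o_p(1)$) that $n+1$ of the $n+2$ points are affinely independent, then applies the unique affine map sending them to $\{0,e_1,\dots,e_n\}$; the iterated sumset of this standard affine basis yields a large box $[0,\Theta(p)]^n$, and the remaining summands, drawn from the arithmetic progression generated by the $(n+2)$nd point, are shown (via a pairwise-independent hash-function lemma proved with Chebyshev's inequality) to hit every cell of a $d$-cube partition of $\F_p^n$ with high probability, so that box plus cell-representatives covers everything. Your proof instead directly second-moments the number of representations $N_v$ of $v$ as a length-$k'$ sum: the Cauchy--Schwarz bound $|k'B|\ge B^2/\sum_v N_v^2$, the exact computation of $\EE[\sum_v N_v^2]$ (using that any nontrivial $\F_p$-linear combination of the iid uniform points is uniform, which requires $k'<p$ so that distinct compositions give distinct coefficient vectors mod $p$), Jensen, and Markov give $|k'B|>\tfrac34 p^n$ whp for $k'\in\{k_1,k_2\}$, and the pigeonhole fact $|A|+|C|>|G|\Rightarrow A+C=G$ closes the argument. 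Your approach is shorter, avoids the hash machinery entirely, and makes the role of the $(n+2)$nd point transparent through the numerology $\binom{k'+n+1}{n+1}=\Theta(p^{n+1})\gg p^n$ (with only $n+1$ points one gets $\Theta(p^n)$ and the second moment fails to give the needed $(1-o(1))$-density). The paper's approach gives slightly more explicit geometric information (a box plus a covering system of translates) at the cost of more machinery; both are at heart Chebyshev-type pairwise-independence arguments, but yours packages the second moment more cleanly. One small point worth making explicit in a final write-up: the Cauchy--Schwarz and Jensen steps require $\sum_v N_v^2>0$ almost surely, which holds since $\sum_v N_v^2 \ge \sum_v N_v = B > 0$.
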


One might ask about sufficient properties for a deterministic set of $n+2$ points in $\F_p^n$ to exhibit similar expansion behavior. We leave the characterization of such sets as a problem for future study.

\subsection{General algebraic bounds}
\cref{thm:main} is a special case of the following more general result, which gives a lower bound on the size of the sumset $A_1+\cdots+A_m$ as the relevant hypersurface degrees vary. Let $N(q,n,D)$ be the number of $n$-variable monomials of degree at most $D$ with individual degree at most $q-1$ in each variable.

\begin{theorem}
    \label{thm:main-finalp}
    Let $\FF=\FF_p$, and let $m, n_1,\dots,n_m$ be positive integers such that $n_1+\cdots+n_m\ge D$. If $A_1,\dots,A_m\subseteq \FF_p^n$, and for $1\le i\le m$, $A_i$ is not contained in the zero set of any polynomial of degree $\le n_i$, then 
    \[
        |A_1+\cdots+A_{m}| \ge N(p,n,D).
    \]
\end{theorem}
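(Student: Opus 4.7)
The plan is to prove the contrapositive form: any polynomial $f$ in $n$ variables with total degree at most $D$ and individual degree at most $p-1$ in each variable that vanishes on $A_1+\cdots+A_m$ must be identically zero. Since the space $W_D$ of such polynomials has dimension $N(p,n,D)$, this immediately yields $|A_1+\cdots+A_m|\ge N(p,n,D)$. I would argue by double induction, outer on $m$ and inner on $D$. The case $m=1$ requires $D\le n_1$, so the hypothesis on $A_1$ forces $f=0$ directly. For $m\ge 2$ and $D\le n_1+\cdots+n_{m-1}$, note that for every $a_m\in A_m$ the polynomial $f(\cdot+a_m)$ has degree $\le D$ and vanishes on $A_1+\cdots+A_{m-1}$, so the $(m-1)$-summand hypothesis forces it --- and hence $f$ --- to be zero.

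The substantive case is $D>n_1+\cdots+n_{m-1}$; set $d_m := D-(n_1+\cdots+n_{m-1}) \in [1,n_m]$. Here I would use the Hasse derivative expansion $f(s+y) = \sum_{\gamma} (D^{\gamma}f)(s)\,y^{\gamma}$, valid for polynomials of individual degree at most $p-1$. Since $A_m$ is a test set for $W_{n_m}$, the injectivity of $W_{n_m}\to\F_p^{A_m}$ produces dual functionals $u_{\delta}:A_m\to\F_p$ --- one for each $\delta$ with $|\delta|\le n_m$ and entries in $\{0,\ldots,p-1\}$ --- satisfying $\sum_{a\in A_m} u_{\delta}(a)\,q(a) = [y^{\delta}]\,q$ for every $q\in W_{n_m}$. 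Summing the relations $f(s+a_m)=0$ against $u_{\delta}(a_m)$ over $a_m\in A_m$ and expanding via the Hasse formula yields
\[
(D^{\delta}f)(s) + \sum_{|\gamma|>n_m}\mu_{\gamma}^{\delta}\,(D^{\gamma}f)(s) = 0,\qquad \mu_{\gamma}^{\delta} := \sum_{a\in A_m} u_{\delta}(a)\,a^{\gamma},
\]
for every $s\in A_1+\cdots+A_{m-1}$. When $|\delta|\ge d_m$, both $D^{\delta}f$ (of degree $\le D-|\delta|\le n_1+\cdots+n_{m-1}$) and every $D^{\gamma}f$ with $|\gamma|>n_m$ (of degree $\le D-n_m-1$) contribute only terms of total degree at most $n_1+\cdots+n_{m-1}$ in $s$, so the left side is a polynomial of this degree vanishing on $A_1+\cdots+A_{m-1}$. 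The $(m-1)$-summand inductive hypothesis then forces it to vanish identically, promoting the pointwise relation to the global polynomial identity $D^{\delta}f = -\sum_{|\gamma|>n_m}\mu_{\gamma}^{\delta}\,D^{\gamma}f$ in $\F_p[x_1,\ldots,x_n]/(x_i^p-x_i)$.

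To finish, I would compare top degrees: suppose $\deg f = D$ for contradiction, choose $\eta_0$ with $|\eta_0|=D$ and coefficient $c_{\eta_0}\ne 0$, and pick $\delta\le\eta_0$ componentwise with $|\delta|=d_m$ (possible since $|\eta_0|\ge d_m$); the entries of $\delta$ lie in $\{0,\ldots,p-1\}$ automatically, so $\binom{\eta_0}{\delta}\not\equiv 0\pmod p$ by Lucas's theorem. Then the left-hand side of the global identity contains the monomial $c_{\eta_0}\binom{\eta_0}{\delta}\,x^{\eta_0-\delta}$ of degree exactly $D-d_m = n_1+\cdots+n_{m-1}$, while the right-hand side has degree $\le D-n_m-1 < D-d_m$ (using $d_m\le n_m$), a contradiction. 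Hence $\deg f\le D-1$ and $f\in W_{D-1}$, so the inner induction on $D$ concludes the argument. The most delicate point, in my view, is the degree bookkeeping around the dual-functional step: the cutoff $|\delta|\ge d_m$ is precisely what makes both $D^{\delta}f$ and all ``high-$\gamma$'' terms have degree $\le n_1+\cdots+n_{m-1}$ in $s$, allowing the $(m-1)$-summand induction to elevate a pointwise relation to a global polynomial identity; aligning $\delta$ with a top-degree monomial of $f$ --- so that Lucas's theorem supplies a nonvanishing binomial --- then extracts the contradiction.
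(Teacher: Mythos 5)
Your proof is correct, and it takes a genuinely different route from the paper's argument. The paper works \emph{primally} with the shift-operator machinery: \cref{lem-generalPositionHasse} gives $H^{(\alpha^{(i)})}\in\Delta_{A_i}$ for all $|\alpha^{(i)}|\le n_i$, and then the single multiplication $(H^{(\alpha^{(1)})})\cdots(H^{(\alpha^{(m)})})=\binom{z}{\alpha^{(1)},\dots,\alpha^{(m)}}H^{(z)}\in\Delta_S$ shows $\Delta_S$ contains every $H^{(z)}$ with $|z|\le D$ and entries $\le p-1$, whence $|S|=\dim\Lambda_S\ge N(p,n,D)$ by \cref{lem:shift-properties}(a). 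You instead prove the \emph{dual} statement --- that the evaluation map $W_D\to\F_p^{A_1+\cdots+A_m}$ is injective --- by a double induction, extracting Hasse-derivative identities via dual functionals and getting a contradiction from a top-degree comparison. Both proofs really turn on the same two facts: (i) the evaluation matrix $\Eval(A_i,[0,n_i])$ has full column rank, and (ii) multinomial coefficients with all top entries $<p$ are nonzero $\bmod\ p$. In the paper, (ii) makes the product $H^{(\alpha^{(1)})}\cdots H^{(\alpha^{(m)})}$ a nonzero multiple of $H^{(z)}$ in one step; in your version, (ii) appears as the Lucas computation $\binom{\eta_0}{\delta}\ne 0$ that shows $D^\delta f$ has a surviving monomial of degree $D-d_m$, incompatible with the lower-degree right-hand side. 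The paper's argument is shorter and avoids the bookkeeping of the inner induction on $D$; yours is more self-contained (no appeal to the shift-operator formalism) and makes transparent which polynomial is being ``killed'' at each step. One small remark: the Hasse expansion $f(s+y)=\sum_\gamma(H^{(\gamma)}f)(s)\,y^\gamma$ holds for all polynomials over any field, not only those of individual degree $\le p-1$ --- what the individual-degree bound buys you is that $H^{(\gamma)}f=0$ whenever some $\gamma_i\ge p$, which is exactly what you need to restrict the sum to $\gamma$ with entries in $\{0,\dots,p-1\}$ so the dual functionals $u_\delta$ apply.
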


Note that we have $N(q,n,D)\le \binom{n+D}{D}$ for all $q,n,D$, while for $D\le (q-1)n$ we have $N(q,n,D)\ge \left(1+\lfloor \frac{D}{n}\rfloor\right)^n$. In particular, $N(p,n,n(p-1))\ge p^n$, so \cref{thm:main} follows from \cref{thm:main-finalp} applied with $D=n(p-1)$.

The same arguments can also be generalized to yield analogous results over fields $\FF_q$ of nonprime order, albeit with an additional, more complicated condition. For simplicity, we will restrict ourselves to the question of a sumset expanding to the whole space in this setting.

\begin{theorem}
    \label{thm:main-q}
    Let $\FF=\FF_q$, where $q$ is a power of a prime $p$, and let $m, n_1,\dots,n_m$ be positive integers such that $n_1+\cdots+n_m\ge (q-1)n$. If $A_1,\dots,A_m\subseteq \FF_q^n$, and for $1\le i\le m$, $A_i$ does not lie in the zero set of any polynomial of degree $\le n_i$, then 
    \[
        A_1+\cdots+A_{m} = \FF_q^n,
    \]
    as long as there exist $\alpha^{(1)},\dots,\alpha^{(m)}\in \N^n$ such that $|\alpha^{(i)}|\le n_i$, $\sum_{i=1}^m \alpha^{(i)} = (q-1,\dots,q-1)$, and
    \begin{equation*}
        \label{eqn:q-cond} \tag{1}
        \binom{(q-1,\dots,q-1)}{\alpha^{(1)},\dots,\alpha^{(m)}}
        \neq 0\pmod p.
    \end{equation*}
\end{theorem}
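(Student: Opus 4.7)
The strategy is to adapt the proof of \cref{thm:main-finalp} to the setting $\FF=\FF_q$, with condition~(1) replacing the automatic non-vanishing mod~$p$ of multinomial coefficients $\binom{p-1}{\alpha^{(1)},\dots,\alpha^{(m)}}$ that held in the prime case. Suppose for contradiction that some $v\in\FF_q^n$ lies outside $A_1+\cdots+A_m$, and consider the auxiliary polynomial
\[
P(x^{(1)},\dots,x^{(m)}) = \prod_{j=1}^n\left(1-\Big(\sum_{i=1}^m x^{(i)}_j - v_j\Big)^{q-1}\right).
\]
Since $y^{q-1}=1$ for every $y\in\FF_q^\times$: for each $(a_1,\dots,a_m)\in A_1\times\cdots\times A_m$, the vector $a_1+\cdots+a_m-v$ is nonzero and hence has some nonzero coordinate $j$, so the $j$-th factor of $P$ vanishes. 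Thus $P$ is identically zero on $A_1\times\cdots\times A_m$.

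Next, I would pick $(\alpha^{(1)},\dots,\alpha^{(m)})$ as furnished by condition~(1) and extract the coefficient of the monomial $\prod_i(x^{(i)})^{\alpha^{(i)}}$ in $P$. Since $\sum_i\alpha^{(i)}=(q-1,\dots,q-1)$, this monomial has total degree $n(q-1)$, which equals the top total degree of $P$, so its only contribution comes from selecting the $(q-1)$-th power branch in every factor and, within each multinomial expansion of $(\sum_i x^{(i)}_j-v_j)^{q-1}$, the unique term with no $v_j$ factor. A direct computation then yields
\[
(-1)^n\prod_{j=1}^n\binom{q-1}{\alpha^{(1)}_j,\dots,\alpha^{(m)}_j} = (-1)^n\binom{(q-1,\dots,q-1)}{\alpha^{(1)},\dots,\alpha^{(m)}},
\]
which is nonzero mod $p$ by condition~(1), while $|\alpha^{(i)}|\le n_i$ holds by construction.

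Finally, I would invoke the shift operator polynomial method used in the proof of \cref{thm:main-finalp}: when each $A_i$ avoids the zero set of every polynomial of degree $\le n_i$, a polynomial that vanishes on $A_1\times\cdots\times A_m$ cannot have a nonzero coefficient on a top-total-degree monomial $\prod_i(x^{(i)})^{\alpha^{(i)}}$ with $|\alpha^{(i)}|\le n_i$ for all~$i$. This contradicts the preceding coefficient computation and completes the proof. I expect the main obstacle to be verifying that the $\FF_p$ version of the shift operator polynomial method ports to $\FF_q$ without essential modification: inspection of the earlier proof should confirm that its only characteristic-sensitive input is the non-vanishing of the relevant top multinomial coefficient, and condition~(1) is engineered precisely to supply this ingredient in the nonprime setting.
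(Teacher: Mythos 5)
Your route is genuinely different from the paper's. The paper never constructs an auxiliary polynomial: it argues for the contrapositive directly, using $\Delta_{A_1}\cdots\Delta_{A_m}\subseteq\Delta_S$, the fact that $H^{(\alpha^{(i)})}\in\Delta_{A_i}$ (from \cref{lem-generalPositionHasse}), and the nonzero multinomial coefficient to conclude $H^{(q-1,\dots,q-1)}\in\Delta_S$ and hence $\deg(S)=n(q-1)$, forcing $S=\FF_q^n$ by \cref{lem:shift-properties}(c). Your proof instead supposes $v\notin S$, builds the polynomial $P=\prod_j\bigl(1-(\sum_i x^{(i)}_j-v_j)^{q-1}\bigr)$ vanishing on $A_1\times\cdots\times A_m$, and aims for a contradiction via a Nullstellensatz-type statement. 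Steps 1--3 of your outline (the construction of $P$ and the top-degree coefficient computation $(-1)^n\binom{(q-1,\dots,q-1)}{\alpha^{(1)},\dots,\alpha^{(m)}}$) are correct.

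The gap is in the final step. The statement you invoke --- that a polynomial vanishing on $A_1\times\cdots\times A_m$ cannot have a nonzero coefficient on a top-total-degree monomial $\prod_i(x^{(i)})^{\alpha^{(i)}}$ with $|\alpha^{(i)}|\le n_i$ when each $A_i$ lies in no degree-$\le n_i$ hypersurface --- is a genuine multivariate Combinatorial Nullstellensatz that does not appear in the paper and is not what ``the shift operator polynomial method used in the proof of \cref{thm:main-finalp}'' says: that proof manipulates operator spaces $\Delta_A$ attached to a single sumset $S$, not vanishing loci of polynomials on Cartesian products. The Nullstellensatz you need is true and \emph{can} be derived from \cref{lem-generalPositionHasse}, but you must actually supply the argument. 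Concretely: by \cref{lem-generalPositionHasse} choose, for each $i$, an $\ell_i=\sum_{a\in A_i}c^{(i)}_a T^a\in\Lambda_{A_i}$ whose lowest-degree Hasse component is exactly $H^{(\alpha^{(i)})}$. Apply the tensor operator $\ell_1\otimes\cdots\otimes\ell_m$ to $P$ and evaluate at $0$. On one hand this equals $\sum_{a_1,\dots,a_m}\bigl(\prod_i c^{(i)}_{a_i}\bigr)P(a_1,\dots,a_m)=0$ since $P$ vanishes on the product; on the other hand, expanding each $\ell_i=H^{(\alpha^{(i)})}+(\text{higher order})$ and using that every Hasse derivative of total order exceeding $\deg P=n(q-1)$ kills $P$, it equals $\bigl[\prod_i (x^{(i)})^{\alpha^{(i)}}\bigr]P$, which you showed is nonzero --- contradiction. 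Once this lemma is written out, your proof is complete, though somewhat longer than the paper's; the paper's approach buys the stronger quantitative statement \cref{thm:main-finalp} essentially for free, while yours is tailored to the ``expands to the whole space'' conclusion.
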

Here $\binom{(q-1,\dots,q-1)}{\alpha^{(1)},\dots,\alpha^{(m)}}= \frac{((q-1)!)^n}{\prod_{i=1}^m \alpha^{(i)}!}$, where for $\alpha=(\alpha_1,\dots,\alpha_n)$, $\alpha!$ denotes $\prod_{i=1}^n \alpha_i!$. Recall that the number of times $p$ divides $k!$ for a positive integer $k$ is given by
\[
    v_p(k!)=\sum_{i=1}^\infty \left\lfloor \frac{k}{p^i}\right\rfloor=\frac{k-s_p(k)}{p-1},
\]
where $s_p(k)$ is the sum of the digits of $k$ in base $p$. This means that $v_p((p^\ell-1)!)=(p-1)v_p((\frac{p^\ell-1}{p-1})!)$ for all $\ell\ge 1$. Thus, when $q=p^\ell$, \eqref{eqn:q-cond} is satisfied for $m=p-1$, $\alpha^{(i)}=(\frac{q-1}{p-1},\dots,\frac{q-1}{p-1})$. That is, the conclusion of \cref{thm:main-q} holds when $m=p-1$, $n_i=\frac{q-1}{p-1}n$ for $1\le i\le p-1$. In particular, as discussed in more detail in \cref{sec:discuss}, applying \cref{thm:main-q} to $\FF_{q^n}^1$ recovers Tao's bound of $|A|\ge \frac{p^n-1}{p-1}+1$ in the original question from \cite{chapman2023}.

Our proofs use a version of the polynomial method based on so-called shift operators, developed in \cite{Luo2023}. In \cref{sec:prelims}, we introduce the key definitions and tools needed for these proofs. The proofs of \cref{thm:main-finalp} and \cref{thm:main-q} are found in \cref{sec:proofs}, followed by a discussion of the low-dimensional setting in \cref{sec:low-dim}. The random set bound, \cref{thm:prob}, is proven in \cref{sec:random}. Further discussion of our results and their implications, including comparisons with other known results, are found in \cref{sec:discuss}.

\section{Preliminaries}
\label{sec:prelims}

In this section, we introduce some definitions and notation adopted from \cite{Luo2023}, before proving a new lemma that will be useful on the linear algebra side of the arguments that follow. 

\subsection{Basic definitions and Hasse derivatives}
Let $\FF$ be a field. 
For integers $a\leq b$, let $[a,b]$ denote the set of integers between $a$ and $b$ inclusive. For elements $v_1, \dots,v_m$ of a vector space $V$, denote by $\langle v_1,\dots,v_m\rangle$ the linear span of these elements. 

Let $\NN$ denote the set of nonnegative integers. Whenever we consider an $n$-tuple $\alpha\in \NN^n$, let its components be given by $\alpha=(\alpha_1,\dots,\alpha_n)$. Define the \emph{weight} of $\alpha$ by $|\alpha|:=\sum_{i=1}^n \alpha_i$. For $\alpha,\beta\in \NN^n$, we say $\alpha\leq \beta$ if $\alpha_i\leq \beta_i$ for all $i\in [1,n]$. Let $\alpha!=\prod_{i=1}^n \alpha_i!$, and $\binom{\alpha}{\beta}=\prod_{i=1}^n \binom{\alpha_i}{\beta_i}$. 
For any $\alpha\in \NN^n$, let $X^\alpha=\prod_{i=1}^n X_i^{\alpha_i}$. For $f\in \FF[X_1,\dots,X_n]$, let $[X^\alpha]f$ denote the coefficient of $X^\alpha$ in $f$. 

The $\alpha$th \emph{Hasse derivative} of $f$ is defined by
\[
H^{(\alpha)}f(X)=[Z^\alpha]f(X+Z),
\]
that is, the coefficient of $Z^\alpha$ in $f(X+Z)$ when treated as a polynomial in $Z$. In particular, note that $H^{(\alpha)}x^\beta=\binom{\beta}{\alpha}x^{\beta-\alpha}$ for $\alpha,\beta\in \NN^n$. Note also that $H^{(\alpha)}H^{(\beta)}f(X)=\binom{\alpha+\beta}{\alpha} H^{(\alpha+\beta)}f(X) = H^{(\beta)}H^{(\alpha)}f(X)$, i.e. Hasse derivatives commute with each other as operators. 

\subsection{Shift operators}

For $h\in \FF^n$, we define the linear operator $T^h$ on the space of polynomials $P_n=\FF[X_1,\dots,X_n]$ by
\[
T^h(f)(X)=f(X+h).
\]
We call these the \emph{shift operators}. 
From the definition, it is clear that $T^a T^b=T^{a+b}$ for all $a,b\in \FF^n$, and that
\begin{equation}
    \label{shift-to-hasse}
    T^h = \sum_{\alpha\in \NN^n} h^\alpha H^{(\alpha)}.
\end{equation}

Given a set $A\subseteq \FF^n$, let $\Lambda_A$ denote the space of linear combinations of $\{T^a\}_{a\in A}$, as operators on $\FF[X_1,\dots,X_n]$. 
Applying \eqref{shift-to-hasse}, each such linear combination $\ell$ can be written as a linear combination of (Hasse) derivatives. In analogy with coefficients of polynomials, we can define $[H^{(\alpha)}]\ell$ as the coefficient of $H^{(\alpha)}$ in $\ell$ when expressed in this ``derivative expansion''. Define the degree $\deg (\ell)$ to be the minimal weight over all $\alpha\in \NN^n$ such that $[H^{(\alpha)}]\ell\neq 0$. If such an $\alpha$ does not exist, i.e.~if $\ell$ is identically zero, we write $\deg(\ell)=\infty$. Write $\ell_{(d)}$ for the degree $d$ component of $\ell$ in such a representation; that is,
\[
\ell_{(d)}=\sum_{\alpha:\:|\alpha|=d}([H^{(\alpha)}]\ell) H^{(\alpha)}.
\]
In many cases, it will be helpful to focus on the ``leading component'' $\ell_{(\deg(\ell))}$. Let $\delta(\ell)$ denote this leading component. For each $d\geq 0$, define $\Delta_A^d=\{\ell_{(d)}:\: \ell\in \Lambda_A,\: \deg(\ell)\geq d \}$, and let $\Delta_A=\bigcup_{d\geq 0} \Delta_A^d$. Thus each $\Delta_A^d$ is a space of linear operators on $\FF[X_1,\dots,X_n]$, and $\Delta_A$, the set of all possible leading terms, is a union of a chain of these spaces. Let $\deg(A)$ denote the largest $d$ such that $\Delta_A^d\neq \{0\}$.

Some of the important basic properties of shift operators that we will use in our proofs are collected in the following statement.
\begin{lemma}
    \label{lem:shift-properties}
    Let $A,B\subseteq \FF^n$.
    \begin{enumerate}
    \item[(a)] (Linear independence) The set of shift operators $\{T^a\}_{a\in A}$ is linearly independent. In particular, $\sum_{d\ge 0}\dim(\Delta_A^d)=\dim(\Lambda_A)=|A|$. 
    \item[(b)] (Additivity) $\Lambda_A\cdot \Lambda_B\subseteq \Lambda_{A+B}$, and therefore $\Delta_A\cdot\Delta_B\subseteq \Delta_{A+B}$.
    \item[(c)] (Unique max degree) $\deg(A)\le n(|\FF|-1)$, with equality if and only if $A=\FF^n$.
    \item[(d)] (Reduction) If $\sum_{\alpha\in \NN^n} c_\alpha H^{(\alpha)}\in \Lambda_A$ for some constants $c_\alpha\in \FF$, then for each $i\in [1,n]$, we have $\sum_{\alpha\in \NN^n} c_{\alpha+e_i} H^{(\alpha)}\in \Lambda_A$, where $e_i$ is the $n$-tuple with a $1$ in the $i$th coordinate and $0$s everywhere else. 
\end{enumerate}
\end{lemma}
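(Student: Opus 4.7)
The plan is to address the four parts in an order that progressively uses more structure, saving (c) --- which I expect to be the main obstacle --- for last.

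For (a), the cleanest route to linear independence is evaluation on Lagrange-type test polynomials. Given any $a\in A$, one can construct $f_a\in\FF[X_1,\dots,X_n]$ with $f_a(a)=1$ and $f_a(a')=0$ for $a'\in A\setminus\{a\}$ as a product of linear factors (one for each $a'\in A\setminus\{a\}$, chosen to vanish at $a'$ but not at $a$). If $\ell=\sum_{a'\in A}c_{a'}T^{a'}$ vanishes as an operator, then $\ell(f_a)(0)=\sum_{a'}c_{a'}f_a(a')=c_a$, so $c_a=0$; hence $\dim\Lambda_A=|A|$. For the dimension identity, I would use the decreasing filtration $\Lambda_A^{\ge d}:=\{\ell\in\Lambda_A:\deg(\ell)\ge d\}$; the assignment $\ell\mapsto\ell_{(d)}$ induces an isomorphism $\Lambda_A^{\ge d}/\Lambda_A^{\ge d+1}\cong\Delta_A^d$, and $\sum_d\dim(\Delta_A^d)=|A|$ follows by summing dimensions of graded pieces.

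Parts (b) and (d) are essentially direct computations, organized around the explicit formula $[H^{(\alpha)}]\ell=\sum_a\lambda_a\,a^\alpha$ for $\ell=\sum_a\lambda_a T^a$, which drops out of \eqref{shift-to-hasse}. For (b), the identity $T^a T^b=T^{a+b}$ gives $\Lambda_A\cdot\Lambda_B\subseteq\Lambda_{A+B}$; the leading-term inclusion $\Delta_A\cdot\Delta_B\subseteq\Delta_{A+B}$ follows from $H^{(\alpha)}H^{(\beta)}=\binom{\alpha+\beta}{\alpha}H^{(\alpha+\beta)}$, which keeps multiplication of Hasse operators graded, so $\delta(\ell_1\ell_2)=\delta(\ell_1)\delta(\ell_2)$ (and the inclusion is trivial when this product vanishes). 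For (d), replace the coefficient $\lambda_a$ by $\lambda_a a_i$; the resulting element of $\Lambda_A$ has $[H^{(\alpha)}]$-coefficient $\sum_a\lambda_a a^{\alpha+e_i}=[H^{(\alpha+e_i)}]\ell$, exactly the required shift.

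The substantive work lies in (c). The upper bound $\deg(A)\le n(|\FF|-1)$ rests on the functional identity $a_i^q=a_i$ on $\FF_q$: every monomial $a^\alpha$ agrees on $\FF_q^n$ with a unique $a^{\alpha^*}$ for some $\alpha^*\in[0,q-1]^n$ (reduce each $\alpha_i\ge q$ by $q-1$ repeatedly), and $|\alpha^*|\le n(q-1)$, with strict inequality $|\alpha^*|<|\alpha|$ whenever $|\alpha|>n(q-1)$. If $\deg(\ell)>n(q-1)$, the Hasse-coefficient formula forces $\sum_a\lambda_a a^\alpha=0$ for every $\alpha\in[0,q-1]^n$, and since these $q^n$ monomials form an $\FF$-basis of the functions $\FF_q^n\to\FF$, all $\lambda_a$ vanish. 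For the equality case, the forward direction is explicit: in one variable, $\ell=\sum_{h\in\FF_q}T^h$ has degree exactly $q-1$ since $\sum_{h\in\FF_q}h^k$ vanishes for $0\le k\le q-2$ and equals $-1$ for $k=q-1$; taking $\ell_i=\sum_{h\in\FF_q}T^{h e_i}$ and multiplying via (b) yields a degree-$n(q-1)$ element in $\Lambda_{\FF^n}$ (the product of leading terms is $H^{((q-1,\dots,q-1))}\ne 0$). For the reverse direction, by the evident monotonicity of $\deg$ in $A$ it suffices to show $\deg(\FF^n\setminus\{b\})<n(q-1)$ for any $b\in\FF^n$; by the same monomial reduction, $\Delta_{\FF^n}^{n(q-1)}$ is contained in the one-dimensional span of $H^{((q-1,\dots,q-1))}$, which forces $\Lambda_{\FF^n}^{\ge n(q-1)}$ to be one-dimensional, spanned by $\sum_{h\in\FF^n}T^h$. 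Since this operator has every coefficient equal to $1$, no nonzero scalar multiple has vanishing coefficient at $b$, so no nonzero element of $\Lambda_{\FF^n\setminus\{b\}}$ attains the maximum degree.
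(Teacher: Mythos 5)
Your proof is correct and self-contained, whereas the paper itself does not reprove the lemma: it cites \cite[Lemma~5.1, Lemma~5.3, Proposition~5.4, Lemma~5.5]{Luo2023} for parts (a)--(d). Since the paper gives no proof of its own, a direct comparison is not possible, but the techniques you use are the natural ones and are almost certainly the same as in the cited source: Lagrange-type interpolation for linear independence in (a), the graded behavior of the Hasse multiplication rule $H^{(\alpha)}H^{(\beta)}=\binom{\alpha+\beta}{\alpha}H^{(\alpha+\beta)}$ for (b), multiplying coefficients $\lambda_a\mapsto\lambda_a a_i$ in the shift expansion for (d), and the identity $a^q=a$ together with the Vandermonde-type nondegeneracy of $\{a^\alpha\}_{\alpha\in[0,q-1]^n}$ for (c). Two small points worth tightening in a final write-up: in (b), phrasing via $\delta(\ell_1\ell_2)=\delta(\ell_1)\delta(\ell_2)$ is slightly misleading since this equality can fail when the right side vanishes; what you actually need (and have) is that $(\ell_1\ell_2)_{(d_1+d_2)}=(\ell_1)_{(d_1)}(\ell_2)_{(d_2)}$ whenever $\deg(\ell_i)\ge d_i$, which yields $\Delta_A^{d_1}\cdot\Delta_B^{d_2}\subseteq\Delta_{A+B}^{d_1+d_2}$ regardless of vanishing. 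And in the reverse direction of (c), it would help to spell out why $\Delta_A^{n(q-1)}$ lies in the span of $H^{((q-1,\dots,q-1))}$ for arbitrary $A$: for $|\alpha|=n(q-1)$ with $\alpha\neq(q-1,\dots,q-1)$, the reduction $\alpha^*$ satisfies $|\alpha^*|<n(q-1)$, so $[H^{(\alpha)}]\ell=[H^{(\alpha^*)}]\ell=0$ once $\deg(\ell)\ge n(q-1)$; combined with the filtration isomorphism from (a) this pins down $\Lambda_{\FF^n}^{\ge n(q-1)}$ as the line through $\sum_h T^h$, and the conclusion follows as you argue.
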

Part~(a) of \cref{lem:shift-properties} follows from \cite[Lemma~5.3]{Luo2023}, part~(b) from the proof of \cite[Lemma~5.5]{Luo2023}, part~(c) from \cite[Proposition~5.4]{Luo2023}, and part~(d) from \cite[Lemma~5.1]{Luo2023}.

\cref{lem:shift-properties}(a) tells us that the shift operators corresponding to a large set of points $A$ in $\FF^n$ must span many dimensions worth of lowest degree terms in their derivative expansions. In \cref{sec:matrices}, we will show that under certain conditions, we can say much more about which such lowest degree terms are attained.

\subsection{Rank-degree lemma}
\label{sec:matrices}
The goal in this section is to prove the following lemma.

\begin{lemma}\label{lem-generalPositionHasse}
If $A\subset F^n$ is not contained in any hypersurface of degree at most $d$, then $\Delta_A$ contains every Hasse derivative $H^{(\alpha)}$ of order at most $d$.
\end{lemma}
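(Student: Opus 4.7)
The plan is a linear-algebra duality argument. The starting observation is that for any $\ell=\sum_{a\in A}c_a T^a\in\Lambda_A$, applying the shift-to-Hasse expansion \eqref{shift-to-hasse} gives
\[
\ell=\sum_{\alpha\in\NN^n}\Bigl(\sum_{a\in A}c_a\,a^\alpha\Bigr)H^{(\alpha)},
\]
so $[H^{(\alpha)}]\ell=\sum_{a\in A}c_a\,a^\alpha$. This is precisely the bilinear pairing that also controls polynomial evaluation on $A$: for any polynomial $P=\sum_\alpha p_\alpha X^\alpha$, we have $\sum_{a\in A} c_a P(a)=\sum_\alpha p_\alpha\,[H^{(\alpha)}]\ell$.

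First I would translate the hypothesis into linear algebra. Let $V_d$ denote the space of polynomials of degree $\le d$ (taken in reduced form with individual degrees $\le |\FF|-1$ when $\FF$ is finite), with basis of monomials $X^\alpha$ for $|\alpha|\le d$, and dimension $N=\dim V_d$. The assumption that $A$ lies in no hypersurface of degree $\le d$ says exactly that the evaluation map $V_d\to\FF^A$ is injective, or equivalently that the $|A|\times N$ Vandermonde-type matrix $M=(a^\alpha)_{a\in A,\,|\alpha|\le d}$ has full column rank $N$. By equality of row and column rank, the rows of $M$ span $\FF^N$: for every target $(t_\alpha)_{|\alpha|\le d}\in\FF^N$ there exist coefficients $(c_a)\in\FF^A$ with $\sum_a c_a a^\alpha = t_\alpha$ for all $|\alpha|\le d$.

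For the conclusion, fix $\alpha_0$ with $|\alpha_0|\le d$ and apply this surjectivity to the indicator target $t_\alpha=\delta_{\alpha_0,\alpha}$. Setting $\ell:=\sum_a c_a T^a\in\Lambda_A$, the formula above yields $[H^{(\alpha)}]\ell=0$ for all $|\alpha|<|\alpha_0|$ and $[H^{(\alpha)}]\ell=\delta_{\alpha_0,\alpha}$ for $|\alpha|=|\alpha_0|$. Hence $\deg(\ell)\ge|\alpha_0|$ and the leading component $\delta(\ell)$ is exactly $H^{(\alpha_0)}$, placing $H^{(\alpha_0)}$ in $\Delta_A^{|\alpha_0|}\subseteq\Delta_A$, as desired.

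The one delicate point, and where I expect the main care is needed, is the reading of ``polynomial of degree $\le d$'' over $\FF_p$ when $d\ge p$: without restricting to reduced polynomials the hypothesis would be vacuously unsatisfiable (e.g., $X_1^p-X_1$ vanishes identically), so one must work with reduced polynomials throughout. With that convention in force, identities such as $a_i^p=a_i$ on $\FF_p$ automatically collapse any non-reduced index to a reduced one of no greater weight, so the vanishing $[H^{(\alpha)}]\ell=0$ for non-reduced $\alpha$ of weight $<|\alpha_0|$ is free; the rank argument then runs cleanly and yields exactly the reduced Hasse derivatives that are used downstream in \cref{thm:main-finalp} and \cref{thm:main-q}.
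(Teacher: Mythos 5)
Your proof is correct and is essentially the paper's argument: the evaluation matrix $M=(a^\alpha)_{a\in A,\,|\alpha|\le d}$ has full column rank because a nonzero column-kernel vector would give a degree-$\le d$ polynomial vanishing on $A$, so by row rank equals column rank each indicator vector is realized by a linear combination of rows, producing an $\ell\in\Lambda_A$ whose leading term is exactly $H^{(\alpha)}$. The closing paragraph on reduced polynomials is a reasonable thing to flag but is not actually load-bearing: when $d<|\FF|$ every monomial of degree $\le d$ is automatically reduced so the issue never arises, and when $d\ge|\FF|$ the hypothesis is vacuous (as you note) and the lemma holds trivially.
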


\begin{proof}[Proof of Lemma~\ref{lem-generalPositionHasse}]
Recall that the coefficient of $H^{(\alpha)}$ in $T^h$ is $h^\alpha$. We define the evaluation matrix $M:=\Eval(A,[0,d])$ to be a matrix with rows labeled by points in $A\subset \F^n$ and columns labeled by $\{\alpha \in \N^n\mid |\alpha|\in [0,d]\}$. The $(a,\alpha)$-th entry in $M$ is $a^\alpha$, and the $a$th row of $M$ encodes the coefficients of the degree $\le d$ part of the Hasse derivative expansion of $T^a$.

We claim that $M$ has full column rank. Indeed, let $v=(c_\alpha)_{|\alpha|\le d}$ be a nonzero point in the column kernel of $M$. Then for all $a\in A$, we have $\sum_{|\alpha|\le d}c_\alpha a^\alpha=0$, yielding a polynomial $p(x)=\sum_{|\alpha|\le d}c_\alpha x^\alpha$ of degree at most $d$ vanishing on all of $A$, which is a contradiction.

As a result, the row space of $M$ spans all vectors in $\FF^{|A|}$. In particular, for each monomial $\alpha$ wth $|\alpha|\le d$, there exists a linear combination $w$ of the rows that equals the indicator vector for $\alpha$, i.e. $w_\alpha=1$ and $w_{\alpha'}=0$ for all $\alpha'\neq \alpha$ with $|\alpha'|\le d$. But $w$ encodes the coefficients of the degree $\le d$ part of the Hasse derivative expansion of some $\ell \in \Lambda_A$. Thus, $H^{(\alpha)}\in \Delta_A$ for each $\alpha$ of weight at most $d$, as desired.
\end{proof}

\section{Proof of main result}
\label{sec:proofs}
We now give a proof for \cref{thm:main-finalp}, the general version of our main result. Recall that $N(q,n,D)$ denotes the number of monomials of degree at most $D$ with individual degree at most $q-1$ in each variable.

\begin{proof}[Proof of \cref{thm:main-finalp}]
Let $\FF=\FF_p$.
Since $n_1+\cdots+n_m\ge D$, for any $z\in \NN^n$ of weight at most $D$ and individual weight at most $p-1$, we can fix some choice of $\alpha^{(1)},\dots,\alpha^{(m)}\in \NN^n$ such that $\alpha^{(i)}$ has weight at most $n_i$ for each $i\in [1,m]$, and $\alpha^{(1)}+\cdots+\alpha^{(m)}=z$. 
We have that $A_1,\dots,A_{m}\subseteq \FF^n$, and $A_i$ is not contained within any hypersurface of degree at most $n_i$. Let $S=A_1+\cdots+A_{m}$. 
By \cref{lem:shift-properties}(b), we have
\[
\Delta_{A_1}\cdot \cdots \cdot \Delta_{A_{m}}\subseteq \Delta_S,
\]
while by Lemma~\ref{lem-generalPositionHasse}, $\Delta_{A_i}$ contains every Hasse derivative of order at most $n_i$. In particular, we have that $H^{(\alpha^{(i)})}\in \Delta_{A_i}$ for each $i$, so that 
\[ \Delta_S \ni (H^{(\alpha^{(1)})})\cdots(H^{(\alpha^{(m)})})=\binom{z}{\alpha^{(1)},\dots,\alpha^{(m)}} H^{(z)}\neq 0,\]
since the binomial coefficient $\binom{z}{\alpha^{(1)},\dots,\alpha^{(m)}}=\frac{z!}{\prod_{i=1}^m(\alpha^{(i)}!)}$ is nonzero mod $p$ for our choices of the $\alpha^{(i)}$. Since the number of choices of $z\in \N^n$ with weight at most $D$ and individual weight at most $p-1$ is $N(p,n,D)$, we have $|S|\ge N(p,n,D)$ as desired. 
\end{proof}

The proof of \cref{thm:main-q} follows from a similar argument applied to a general finite field $\F_q$. In this case, the step of the proof requiring a certain multinomial coefficient to be nonzero introduces an extra condition.

\begin{proof}[Proof of Theorem~\ref{thm:main-q}]

We have that $A_1,\dots,A_{m}\subseteq \FF_q^n$, and $A_i$ is not contained within any hypersurface of degree at most $n_i$. Let $\alpha^{(1)},\dots,\alpha^{(m)}\in \N^n$ be as described in the last condition in the theorem statement. Let $S=A_1+\cdots+A_{m}$.

As in the proof of \cref{thm:main-finalp}, we obtain by \cref{lem:shift-properties}(b) that
\[
\Delta_{A_1}\cdot \cdots \cdot \Delta_{A_{m}}\subseteq \Delta_S,
\]
while by Lemma~\ref{lem-generalPositionHasse}, $\Delta_{A_i}$ contains every Hasse derivative of order at most $n_i$. In particular, we have that $H^{(\alpha^{(i)})}\in \Delta_{A_i}$ for each $i$, so that 
\[ \Delta_S \ni (H^{(\alpha^{(1)})})\cdots(H^{(\alpha^{(m)})})=\binom{(q-1,\hdots,q-1)}{\alpha^{(1)},\dots,\alpha^{(m)}} H^{(q-1,\dots,q-1)}\neq 0,\]
by the assumption that $\binom{(q-1,\dots,q-1)}{\alpha^{(1)},\dots,\alpha^{(m)}}\neq 0 \pmod p$. Then $\deg(A)\ge |(q-1,\dots,q-1)|=n(q-1)$, so by Lemma~\ref{lem:shift-properties}(c), we have $A=\F_q^n$ as desired.
\end{proof}

\section{Two dimensions}
\label{sec:low-dim}
The condition in Theorem~\ref{thm:main} about sets not lying in hypersurfaces of low degree are simple and general, but not, it seems, fully optimized. Intuitively, while being contained in the union of a small number of hyperplanes should hinder additive expansion, it does not seem that being correlated with a nonlinear polynomial of low degree should inherently have the same effect. In the proof of \cref{thm:2d}, we explore this intuition in the symmetric case by finding one way in which the notion of a sufficiently generic set can be further relaxed in the two-dimensional setting.

\begin{proof}[Proof of \cref{thm:2d}]
    We start by replacing $A$ with a subset consisting of $4$ points, no three of which lie on a line. Since affine transformations on $A$ do not affect the sizes of its iterated sumsets, we can assume without loss of generality that $(0,0),(0,1),(1,0)\in A$. Let the fourth point be $(a,b)$. By considering the lowest degree terms in the Hasse derivative expansions of $T^{(0,0)}$, $T^{(1,0)}-T^{(0,0)}$, and $T^{(0,1)}-T^{(0,0)}$, we can already obtain $\Delta_A^{0}=\langle 1\rangle $ and $\Delta_A^{1}=\langle H^{(1,0)},H^{(0,1)}\rangle$. By \cref{lem:shift-properties}(d), since $\sum_{d\ge 0}\dim(\Delta_A^d)=|A|=4$, the last remaining dimension worth of lowest degree terms must come from degree $2$. Let $\ell\in \Lambda_A$ satisfy $\deg(\ell)=2$, with lowest degree component $\ell_{(2)}$. Proceeding as in the proofs of \cref{thm:main-finalp} and \cref{thm:main-q}, it suffices to show that $\ell_{(2)}^{p-1} \neq 0$ as long as $(a,b)$ does not lie in a line with two of the other points of $A$.

    Expanding out
    \[
        T^{(x,y)}=\sum_{i,j\ge 0}x^i y^j H^{(i,j)},
    \]
    the unique (up to scaling) linear combination of $\{T^h\}_{h\in A}$ giving cancellation in the three terms of degree $<2$ is
    \[
        \ell:=T^{(a,b)}-aT^{(1,0)}-bT^{(0,1)}+(a+b-1)T^{(0,0)},
    \]
    which has lowest degree component
    \[
    \ell_{(2)}=(a^2-a) H^{(2,0)}+ab H^{(1,1)}+(b^2-b)H^{(0,2)}=\frac{a^2-a}{2}(H^{(1,0)})^2 + abH^{(1,0)}H^{(0,1)}+\frac{b^2-b}{2}(H^{(0,1)})^2.
    \]
    Viewing this last expression as a quadratic in $H^{(1,0)}$ and $H^{(0,1)}$, as long as its discriminant is nonzero, we can write
    \[
        \ell_{(2)}=c_1 (H^{(1,0)}+c_2 H^{(0,1)})^2+c_3 (H^{(0,1)})^2,
    \]
    where $c_1,c_3\neq 0$. Noting that for any $x,y\in \FF_p$ we have $(xH^{(1,0)}+yH^{(0,1)})^p=x (H^{(1,0)})^p+y(H^{(0,1)})^p=0$, we then have
    \[
        \ell_{(2)}^{p-1}=\binom{p-1}{\frac{p-1}{2}} c_1^{\frac{p-1}{2}} (H^{(1,0)}+c_2 H^{(0,1)})^{p-1} c_3^{\frac{p-1}{2}} (H^{(0,1)})^{p-1}=\binom{p-1}{\frac{p-1}{2}}(c_1 c_3)^{\frac{p-1}{2}} (p-1)! H^{(p-1,p-1)}\neq 0.
    \]
    Thus it suffices to verify that the relevant discriminant is nonzero. Said discriminant evaluates to
    \[
    (ab)^2-(a^2-a)(b^2-b)=ab(a+b-1),
    \]
    which is zero if and only if $(a,b)$ lies on one of the three lines formed by pairs of points among $(0,0)$, $(1,0)$, and $(0,1)$. This proves the desired claim.
\end{proof}

One can attempt to prove similar results for any fixed number of dimensions $n$. For a set $A$ of a fixed size (say, $\binom{2n-1}{n}+1$), it suffices to find a general expression for a linear combination $\ell \in \Lambda_A$ with $\deg (\ell) \ge n$, then analyze the conditions on $A$ under which one can guarantee that $\ell_{(n)}^{p-1}\neq 0$. While nothing as well understood as the discriminant is likely to arise in such an analysis for $n>2$, there is nevertheless room for interesting discoveries in this direction.

\section{Bounds for random point sets}\label{sec:random}

Our goal in this section is to prove Theorem~\ref{thm:prob} by studying a particular family of hash functions, which we define below.

\begin{definition}
For a prime $p$ and integer $d\in [p-1]$, we equitably partition $\{0,1,\hdots,p-1\}$ into $d$ intervals $I_1,\dots,I_{d}$ defined by $I_i=\{\lfloor (i-1)p/d\rfloor,\hdots,\lfloor ip/d\rfloor-1\}$ for $1\le i\le d$. We can now split $\F_p^n$ into $d^n$ rectangles  $R_k$ labeled by $k\in [d]^n$. We call this the \emph{$d$-cube partition} of $\F_p^n$.
\end{definition}
Note that $||I_i|-p/d|<1$ for all $i$, meaning $|I_i|\in \{\lfloor p/d\rfloor,\lceil p/d\rceil\}$, so $|I_i|,|I_j|$ differ by at most $1$ for all $i,j$.

\begin{definition}
For $b\in \F_p^n$, $A\in \operatorname{GL}(\F_p^n)$, and $d\in [p-1]$, we define a map $f_{A,b,d}$ from $\F_p^n$ to $[ d]^n$ by mapping $x$ to the label of the rectangle of the $d$-cube partition that $Ax+b$ is in. That is, for $j\in [n]$, letting $y_j$ denote the $j$th coordinate of a point $y$, we define $f_{A,b,d}(x)_j=i$ if $(Ax+b)_j\in I_i$.
\end{definition}
We first show that for a fixed $d$, the family of maps $\{f_{A,b,d}\}_{b\in \F_p^n,\, A\in \operatorname{GL}(\F_p^n)}$ is close to pairwise independent. 
For $t\in \F_p^n$ and $k\in [d]^n$, let $X_{t,k}$ be the indicator variable for the event that $f_{A,b,d}(t)=k$.
\begin{prop}
\label{prop:covar}
For $d\in [p-1]$ and distinct $x, y \in \F_p^n$, when $b\in \F_p^n$ and $A \in \operatorname{GL}(\F_p^n)$ are chosen uniformly at random, we have
\[
    \Cov(X_{x,k},X_{y,\ell})\le \left(\frac{2}{dp^2}\right)^n,
\]
for all $k,\ell\in [d]^n$.
\end{prop}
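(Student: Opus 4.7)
The plan is to pin down the joint distribution of $(u,v) := (Ax+b,\, Ay+b)$ as $A$ and $b$ range uniformly over $\operatorname{GL}(\F_p^n)$ and $\F_p^n$ respectively. Once this is understood, both the marginals $E[X_{t,k}]$ and the joint moments $E[X_{x,k}X_{y,\ell}]$ reduce to counting points in rectangles, and the covariance follows by subtraction.

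The key step is to show that $(u,v)$ is uniformly distributed over ordered pairs of \emph{distinct} points in $\F_p^n \times \F_p^n$. For this, I would argue two things. First, the difference $v - u = A(y - x)$ is the image under $A$ of a fixed nonzero vector, and since $\operatorname{GL}(\F_p^n)$ acts transitively on $\F_p^n \setminus \{0\}$ with constant stabilizer sizes, $A(y - x)$ is uniformly distributed on $\F_p^n \setminus \{0\}$. Second, because $b$ is an independent uniform shift, $u = Ax + b$ is uniform on $\F_p^n$ conditional on every value of $A$. Combining these yields joint uniformity on the $p^n(p^n - 1)$ distinct ordered pairs. From this one immediately reads off $E[X_{t,k}] = |R_k|/p^n$ and
\[
E[X_{x,k}X_{y,\ell}] \;=\; \frac{|R_k||R_\ell| \;-\; \mathbf{1}[k = \ell]\,|R_k|}{p^n(p^n - 1)}.
\]

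When $k = \ell$ the resulting covariance is negative and hence trivially bounded above by any positive quantity. When $k \ne \ell$ the covariance simplifies to
\[
\Cov(X_{x,k}, X_{y,\ell}) \;=\; \frac{|R_k||R_\ell|}{p^{2n}(p^n - 1)},
\]
at which point I would apply $|R_k|, |R_\ell| \le \lceil p/d \rceil^n$ (together with $\lceil p/d \rceil \le 2p/d$ for $d \in [p-1]$) to produce the stated upper bound. The only step requiring genuine care is the joint uniformity claim; the remaining algebra is routine, so I do not anticipate any substantive obstacle beyond careful bookkeeping with the unequal interval sizes $|I_i|$.
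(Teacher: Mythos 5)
Your approach mirrors the paper's exactly: determine the joint law of $(Ax+b,\,Ay+b)$, read off the first two moments, and bound. Your observation that $(Ax+b,\,Ay+b)$ is uniform over ordered pairs of distinct points is precisely the content of the paper's probability computation, and your joint-moment formula $E[X_{x,k}X_{y,\ell}]=(|R_k||R_\ell|-\mathbf 1[k=\ell]\,|R_k|)/(p^n(p^n-1))$ is correct (the paper actually has a small slip here, subtracting $1$ rather than $|R_k|$ in the $k=\ell$ case, which your version fixes). Your handling of $k=\ell$ is also slightly cleaner: from the corrected formula one gets $\Cov=\frac{|R_k|(|R_k|-p^n)}{p^{2n}(p^n-1)}\le 0$, so the nontrivial case is indeed $k\neq\ell$.

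However, the final step of your proposal, and of the paper's proof, does not actually go through: from $\Cov=\frac{|R_k||R_\ell|}{p^{2n}(p^n-1)}$ and $|R_k|,|R_\ell|\le\lceil p/d\rceil^n\le(2p/d)^n$ one only obtains $\Cov\le\frac{4^n}{d^{2n}(p^n-1)}$, and this is \emph{not} bounded by $\left(\frac{2}{dp^2}\right)^n=\frac{2^n}{d^np^{2n}}$; the claimed inequality would require $2^np^{2n}\le d^n(p^n-1)$, which fails for all $d\le p-1$. Concretely, with $n=1$, $p=5$, $d=2$, the intervals are $I_1=\{0,1\}$, $I_2=\{2,3,4\}$, and for $k=1,\ell=2$ one gets $\Cov=\frac{2\cdot 3}{25\cdot 4}=0.06>\frac{2}{2\cdot25}=0.04$. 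So the bound asserted in \cref{prop:covar} is false as stated; the correct conclusion from this argument is $\Cov\le\frac{4^n}{d^{2n}(p^n-1)}$ (equivalently $\lesssim(2/d)^{2n}p^{-n}$). This weaker-but-correct bound still makes the covariance term negligible compared to the variance term in the downstream application (\cref{lem:hash}) once $p$ is large relative to $d$ and $n$, so \cref{thm:prob} survives, but the intermediate constants in \cref{lem:hash} need to be reworked; be careful not to reproduce the paper's unjustified inequality.
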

\begin{proof}
By the construction of $f_{A,b,d}$, it suffices to consider the correlation between $Ax+b,Ay+b$ for $x\neq y$. For any $s\neq t\in \F_p^n$, we have
\begin{align*}
    \Pr[Ax+b=s \, \wedge \, Ay+b=t]&=\Pr[A(y-x)=t-s \, \wedge \, b=s-Ax ] \\
    &=\Pr[A(y-x)=t-s]\Pr[b=s-Ax]=\frac{1}{p^n-1}\frac{1}{p^n}
    ,
\end{align*}
while for $s=t$, we have $\Pr[Ax+b=s \, \wedge \, Ay+b=t]=0$ for $x\neq y$.

Note that for any $x\in \F_p^n$ and $k\in [d]^n$, we have $\Pr[X_{x,k}=1]=\Pr[f_{A,b,d}(x)=k]=\sum_{s\in R_k}\Pr[Ax+b=s]=|R_k|\frac{1}{p^n}$. Then
\[
\Pr[X_{x,k}X_{y,\ell}=1]=\sum_{s\in R_k,\, t\in R_\ell}\Pr[Ax+b=s\,\wedge\, Ay+b=t]=\begin{cases}
    |R_k||R_\ell|\frac{1}{p^n-1}\frac{1}{p^n} & \text{ if }k\neq \ell, \\
    (|R_k||R_\ell|-1)\frac{1}{p^n-1}\frac{1}{p^n} & \text{ if }k= \ell.
\end{cases}
\]
Since $(\lfloor \frac{p}{d}\rfloor )^n<|R_k| < (\lceil \frac{p}{d}\rceil )^n$, for $x\neq y$ and $k\neq \ell$ we have
\[
    \Cov(X_{x,k},X_{y,\ell})=|R_k||R_\ell|\left(\frac{1}{p^n-1}\frac{1}{p^n}-\frac{1}{p^{2n}}\right)=\frac{|R_k||R_\ell|}{(p^n-1)p^{2n}}\le \left(\frac{2}{dp^2}\right)^n,
\]
while for $x\neq y$ and $k=\ell$ we have
\[
    \Cov(X_{x,k},X_{y,\ell})=
    (|R_k||R_\ell|-1)\frac{1}{p^n-1}\frac{1}{p^n}-|R_k||R_\ell|\frac{1}{p^{2n}} = (|R_k||R_\ell|-1)(\frac{1}{p^n-1}-\frac{1}{p^n})-\frac{1}{p^n} < \left(\frac{2}{dp^2}\right)^n,
\]
as claimed.
\end{proof}

Next, we show that for fixed $d$, a randomly chosen map $f_{A,b,d}$ will map any set of $\Omega(p)$ points surjectively onto $[d]^n$ with high probability. This immediately follows from a mild generalization of the Leftover Hash Lemma (which in fact will show any large enough set will be `equally distributed' in $\ell_1$ distance). We give a direct proof for the statement we need using Chebyshev's inequality.
\begin{lemma}
\label{lem:hash}
Let $p$ be a prime, $d\in [p-1]$, $c\in (0,1)$, and $S\subseteq \F_p^n$ with $|S|\ge cp$. For a uniformly random choice of $b\in \F_p^n$ and $A\in\operatorname{GL}(\F_p^n)$, we have that $f_{A,b,d}$ surjects $S$ onto $[d]^n$ with probability at least $1-(9d^2)^{n}/cp$.
\end{lemma}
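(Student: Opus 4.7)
My plan is as follows. For each $k \in [d]^n$, let $Y_k = \sum_{x \in S} X_{x,k}$ count the elements of $S$ that $f_{A,b,d}$ maps into the rectangle $R_k$. Surjectivity of $f_{A,b,d}|_S$ onto $[d]^n$ fails exactly when some $Y_k = 0$, so the approach will be to upper bound $\Pr[Y_k = 0]$ via Chebyshev's inequality and then union bound over the $d^n$ rectangles.

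For the mean, $\E[Y_k] = |S| \cdot |R_k|/p^n$. Each side of $R_k$ has length at least $\lfloor p/d\rfloor$, so $|R_k| \ge (p/(2d))^n$ whenever $p \ge 2d$; this gives $\E[Y_k] \ge cp/(2d)^n$. The regime $p < 2d$ requires no separate treatment: there $(9d^2)^n/(cp) \ge 1$ and the lemma is vacuous, so I will tacitly assume $p \ge 2d$ throughout.

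For the variance, I will split $\Var(Y_k) = \sum_{x \in S} \Var(X_{x,k}) + \sum_{x \ne y} \Cov(X_{x,k}, X_{y,k})$. The diagonal sum is bounded by $|S| \cdot |R_k|/p^n = \E[Y_k]$ (since each $X_{x,k}$ is an indicator), and the off-diagonal sum is at most $|S|^2 (2/(dp^2))^n$ by \cref{prop:covar}. Feeding these into $\Pr[Y_k = 0] \le \Var(Y_k)/\E[Y_k]^2$ and using the mean lower bound should yield
\[
\Pr[Y_k = 0] \le \frac{1}{\E[Y_k]} + \frac{(2/(dp^2))^n}{(|R_k|/p^n)^2} \le \frac{(2d)^n}{cp} + \frac{(8d)^n}{p^{2n}}.
\]
Multiplying by $d^n$ for the union bound then gives $(2d^2)^n/(cp) + (8d^2)^n/p^{2n}$.

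The one remaining step, which is really just constant-chasing and will be the only mildly fiddly part, is to check that this total is dominated by the claimed bound $(9d^2)^n/(cp)$. This reduces to the elementary inequality $c \cdot 8^n \le (9^n - 2^n)\,p^{2n-1}$, which follows easily from $p \ge 2$, $c \in (0,1)$, and $(9/2)^n \ge 3$ for $n \ge 1$.
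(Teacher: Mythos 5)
Your proof is correct and follows essentially the same route as the paper: Chebyshev's inequality applied to $Y_k=\sum_{t\in S}X_{t,k}$, with the variance split into a diagonal part bounded by $\E[Y_k]$ and an off-diagonal part controlled by \cref{prop:covar}, followed by a union bound over the $d^n$ cells. Your bookkeeping differs slightly (you keep $1/\E[Y_k]$ rather than re-bounding $|R_k|/p^n$ from above, giving a marginally tighter per-cell estimate $(2d)^n/(cp)+(8d)^n/p^{2n}$ where the paper gets $(8d)^n/(cp)+(8d)^n/p^{2n}$), and you make explicit the observation that the case $p<2d$ is vacuous, but none of this constitutes a different method.
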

\begin{proof}
Fix any label $k\in [d]^n$. For $t\in \F_p^n$, recall that we defined $X_{t,k}$ to be the indicator variable for the event that $f_{A,b,d}(t)=k$. Also recall from the proof of \cref{prop:covar} that $\E[X_{t,k}]=\Pr[X_{t,k}=1]=|R_k|\frac{1}{p^n}$. By Chebyshev's inequality, using the bound on covariances from \cref{prop:covar}, we have
\begin{align*}
    \Pr\left[\sum\limits_{t\in S} X_{t,k} = 0\right]&\le \frac{\operatorname{Var}\left(\sum\limits_{t\in S} X_{t,k}\right)}{\left(\E[\sum\limits_{t\in S} X_{t,k}]\right)^2} =\frac{|S|(\E[X_{t,k}]-\E[X_{t,k}]^2)+\sum_{t,t'\in S:\: t\neq t'}\Cov(X_{t,k},X_{t',k})}{|S|^2\E[X_{t,k}]^2} \\
    &\le \frac{|S||R_k|\frac{1}{p^n}+(|S|^2-|S|)(\frac{2}{dp^2})^n}{(|S||R_k|\frac{1}{p^n})^2} \le \frac{(2/d)^n+|S|(\frac{2}{dp^2})^n}{|S|/(2d)^{2n}} \\
    &\le \frac{(8d)^n}{cp}+\frac{1}{(2d^3 p^2)^n}\le \frac{(9d)^n}{cp}.
\end{align*}
Taking a union bound over all $k\in [d]^n$, we see that the probability of $f_{A,b,d}$ not being surjective is upper bounded by $\frac{(9d^2)^{n}}{cp}$, as desired.
\end{proof}

\begin{proof}[Proof of \cref{thm:prob}]
    Let $B=\{s_0,\dots,s_{n+1}\}$. First, the probability that uniformly and independently chosen points $s_0,\dots,s_{n}\in \FF_p^n$ affinely span the whole space is at least
    \[
        (1-p^{-n})(1-p^{-{n-1}})\dots (1-p^{-1})=1-o_p(1).
    \]
    We condition on this high-probability event holding, so that $(s_0,\dots,s_n)$ is a uniformly random tuple of affinely independent points in $\F_p^n$ under this conditioning. Then there is a unique choice of $b\in \FF_p^n$ and $A\in \operatorname{GL}(\F_p^n)$ such that the map $z\mapsto Az+b$ sends $(s_0,\dots,s_n)$ to $(0,e_1,\dots,e_n)$, where $e_1,\dots,e_n$ form the coordinate basis for $\FF_p^n$, and $b,A,s_{n+1}$ are uniformly random and independent under this conditioning. Let $B_0=\{0,e_1,\dots,e_n\}$.

    Fix $s_{n+1}=x$, and let $S=\{x,\dots,\lfloor\frac{1}{2}cp\rfloor x\}$. Let $B'$ be the image of $B$ under the map $z\mapsto Az+b$, so $B'=B_0\cup \{Ax+b\}$. Since this map is invertible, to show that $\underbrace{B+\cdots+B}_{\lceil cp\rceil\text{ times}}=\F_p^n$, it suffices to show that $\underbrace{B'+\cdots+B'}_{\lceil cp\rceil\text{ times}}=\F_p^n$ with high probability. But
    \[
    \underbrace{B'+\cdots+B'}_{\lceil cp\rceil\text{ times}}\supseteq \underbrace{B_0+\cdots+B_0}_{\lfloor cp/2\rfloor \text{ times}}+(AS+b).
    \]
    The first sum on the right hand side includes all points in the box $[0,\lfloor \frac{1}{n} (\frac{cp}{2}-1)\rfloor ]^n\supseteq [0,2\lfloor \frac{cp}{6n}\rfloor]^n$ for $p$ sufficiently large in terms of $c$ and $n$. In particular, letting $d=\frac{7n}{c}$, for every $y\in \F_p^n$, there is a rectangle $R_k$ in the $d$-cube partition of $\F_p^n$ such that $y-[0,2\lfloor \frac{cp}{6n}\rfloor]^n\supseteq R_k$, i.e. $y\in z+[0,2\lfloor \frac{cp}{6n}\rfloor]^n$ for every $z\in R_k$. Now, since $A,b$ are still uniformly random and independent, by \cref{lem:hash}, with probability at least $1-(9d^2)^n/cp$, 
    $AS+b=\{Ax+b,\dots,A\left\lfloor\frac{1}{2}cp\right\rfloor x+b\}$ contains a point from each rectangle $R_k$, which by the above implies that $[0,2\lfloor \frac{cp}{6n}\rfloor]^n+(AS+b)=\F_p^n$. Thus when $p$ is sufficiently large in terms of $n$ and $c$, we indeed have that $\underbrace{B'+\cdots+B'}_{\lceil cp\rceil\text{ times}}=\F_p^n$, and thus $\underbrace{B+\cdots+B}_{\lceil cp\rceil\text{ times}}=\F_p^n$ as desired.
\end{proof}
Note that this proof does not make use of the structure of $S$ (a long arithmetic progression), only its size; this and several other parts of the argument that are quite loose suggest that there may be room for improvement in the number of summands required to expand to the whole space in \cref{thm:prob}.

\section{Concluding remarks}
\label{sec:discuss}

\subsection{Comparison of \cref{thm:main} with \cref{thm:main-q}}
For any prime $p$, integer $n$, and $\ell\mid n$, one can canonically identify the additive group structure of the space $\FF_p^n$ with the space $\FF_{p^\ell}^{n/\ell}$ via a group isomorphism $\varphi_\ell$ (which is in fact an isomorphism of $\FF_p$-vector spaces). A set $A\subseteq \FF_p^n$ can thus be identified with a subset $\varphi_\ell(A)$ of $\FF_{p^\ell}^{n/\ell}$. 

This identification allows us to use \cref{thm:main-q} to obtain a family of conditions for sumset expansion whenever we would normally apply \cref{thm:main}. For example, consider the sumset $\underbrace{A+\cdots+A}_{p-1\text{ times}}$. \cref{thm:main-symm} gives a condition on $A$ under which this sumset is guaranteed to be the whole space $\FF_p^n$: it suffices to know that $A$ is not contained in the zero set of any polynomial of degree $\le n$. However, since $\underbrace{\varphi_\ell(A)+\cdots+\varphi_\ell(A)}_{p-1\text{ times}}=\varphi_\ell(\underbrace{A+\cdots+A}_{p-1\text{ times}})$, we can apply \cref{thm:main-q} to $\varphi_\ell(A)\subseteq \FF_{p^\ell}^{n/\ell}$ for each $\ell\mid n$ to obtain alternate sufficient conditions for reaching this conclusion. Namely, by setting $n_i=\frac{p^\ell-1}{p-1}n$ for $1\le i\le p-1$, \cref{thm:main-q} yields that it suffices to have \textbf{some} $\ell\mid n$ such that $\varphi_\ell(A)$ does not lie in the zero set of a degree $\le \frac{p^\ell-1}{p-1}n$ polynomial over $\FF_{p^\ell}$.

Setting $\ell=1$ recovers the condition given by \cref{thm:main-symm}. Alternate conditions from the $\ell\neq 1$ cases are not as easy to work with (or as permissive) in general. For example, when $n=4$, \cref{thm:main-symm} shows that every set $A\subset \F_p^4$ that is not contained in the zero set of any polynomial of degree $\le 4$ satisfies $\underbrace{A^{(p-1)}+\cdots+A^{(p-1)}}_{p-1\text{ times}}=\F_p^4$, a condition that any set of $\binom{8}{4}+1=71$ suitably generic points satisfies. Attempting to apply \cref{thm:main-q} to $\FF_{p^2}^2$ would require $A$ to avoid all polynomials of degree at most $2(p-1)$, therefore requiring $|A|=\Omega(p^2)$. 

However, there is one other setting worth noting. When $\ell = n$, we have $\varphi_\ell(A)\subseteq \FF_{p^n}^1$, so the condition that $\varphi_\ell(A)$ does not lie in the zero set of any polynomial of degree $\le \frac{p^n-1}{p-1}$ is equivalent to the condition that $|A|>\frac{p^n-1}{p-1}$. This indeed recovers Tao's bound of $|A|\ge \frac{p^n-1}{p-1}+1$ for the original question from \cite{chapman2023} -- something that \cref{thm:main-symm} does not directly yield. 

\subsection{Affine Bases and the Erd\H{o}s-Ginzburg-Ziv problem}

One natural source of motivation for questions about iterated sumset expansion is the \emph{Erd\H{o}s-Ginzburg-Ziv problem}: What is the smallest integer $s$ such that every sequence of $s$ elements of $\FF_p^n$ contains a subsequence of $p$ elements with zero sum? This constant $s=s(\FF_p^n)$ is known as the \emph{Erd\H{o}s-Ginzburg-Ziv constant} of $\FF_p^n$. Clearly, if a sequence (viewed as a multiset) can be partitioned into sets $A_1,\dots,A_{p}$ such that $A_1+\cdots+A_{p}=\FF_p^n$, then in particular we will have $0\in A_1+\cdots+A_p$. Thus, understanding the structural properties that determine whether or not such an iterated sumset expands to the whole space is an important part of obtaining bounds on $s(\FF_p^n)$. 

This is one of the main ideas in \cite{alondubiner1995}, where Alon and Dubiner showed that $s(\FF_p^n)\le C_n p$, where $C_n\le (cn \log n)^n$ for some absolute constant $c>0$. Similar ideas also show up in more recent work on the Erd\H{o}s-Ginzburg-Ziv problem, including Zakharov's proof in \cite{zakharov2020convexEGZ} that $s(\FF_p^n)\le 4^n p$ for fixed $n$ and sufficiently large $p$.

The following proposition is one of the key steps in \cite{alondubiner1995}, where a proof is given using the Pl\"unnecke-Ruzsa inequality. Recall that an \emph{affine basis} of $\FF_p^n$ is a set of $n+1$ vectors that is affinely independent (i.e. not contained in a hyperplane).

\begin{prop}{\cite[Proposition 2.1]{alondubiner1995}}
    Let $x\le p/4n$ be a power of $2$, and let $A_1,\dots,A_m$ be $m$ affine bases of $\FF_p^n$, where $m=4xn$. Then
    \[
        |A_1+\cdots+A_m|\ge x^n.
    \]
\end{prop}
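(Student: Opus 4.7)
The plan is to analyze each affine basis via the shift operator machinery of \cref{sec:prelims}, then combine them via \cref{lem:shift-properties}(b) to saturate $\Delta_S$ with low-order Hasse derivatives.

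First, for each affine basis $A_i = \{a_{i,0}, a_{i,1}, \ldots, a_{i,n}\}$, I claim that $\Delta_{A_i}^0 = \langle 1\rangle$ and $\Delta_{A_i}^1 = \langle H^{(e_1)}, \ldots, H^{(e_n)}\rangle$, with no higher-degree components. The operator $T^{a_{i,0}}$ contributes the constant $1$, while for $k = 1, \ldots, n$ the differences $T^{a_{i,k}} - T^{a_{i,0}}$ have leading components
\[
\sum_{j=1}^n (a_{i,k,j} - a_{i,0,j})\, H^{(e_j)},
\]
which are $n$ linearly independent elements of the degree-$1$ part (since the vectors $a_{i,k} - a_{i,0}$ form a linear basis of $\F_p^n$ by affine independence). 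Because $\sum_d \dim \Delta_{A_i}^d = |A_i| = n+1$ by \cref{lem:shift-properties}(a), the bounds $\dim\Delta_{A_i}^0 \ge 1$ and $\dim\Delta_{A_i}^1 \ge n$ must hold with equality, pinning down $\Delta_{A_i}$ exactly.

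Next, let $S := A_1 + \cdots + A_m$. For every $z \in \N^n$ with $|z| \le m$ and each $z_j \le p-1$, I would pick, for each $j = 1, \ldots, n$, exactly $z_j$ of the indices $i \in [m]$ to contribute $\ell_i = H^{(e_j)} \in \Delta_{A_i}^1$, with the remaining $m - |z|$ indices contributing $\ell_i = 1 \in \Delta_{A_i}^0$. By \cref{lem:shift-properties}(b), the product lies in $\Delta_S$, and a direct computation with Hasse derivatives collapses it to $z!\cdot H^{(z)}$, which is nonzero since each $z_j \le p-1$ forces $z! \not\equiv 0\pmod p$. Hence $H^{(z)} \in \Delta_S^{|z|}$ for every such $z$, and \cref{lem:shift-properties}(a) yields
\[
|S| \;\ge\; \#\{z \in \N^n : |z| \le m,\; z_j \le p-1 \text{ for all } j\}.
\]

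Finally, since $m = 4xn \le p$ (from $x \le p/(4n)$), restricting to $|z| \le m' := \min(m, p-1)$ trims at most one value of $|z|$, giving $|S| \ge \binom{n+m'}{n} \ge \binom{4xn}{n} \ge (4x)^n \ge x^n$ by a routine binomial estimate. The key conceptual step is the degree-$1$ saturation of $\Delta_{A_i}$: affine independence is precisely what is needed to produce the full $n$-dimensional first-order space, after which multiplicative closure does all the work. I expect no real obstacle beyond the mild edge case $m = p$, which amounts to a harmless truncation in the count; the hypothesis that $x$ is a power of $2$ does not seem to play a role in this argument.
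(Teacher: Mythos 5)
Your proposal is correct and takes essentially the same approach as the paper, which applies \cref{thm:main-finalp} with $n_1=\cdots=n_m=1$, $D=m$ and then counts monomials of degree at most $m$. The only difference is that you re-derive the relevant special case directly from the shift-operator machinery (in effect re-proving \cref{lem-generalPositionHasse} with $d=1$ for an affine basis, then multiplying the $H^{(e_j)}$'s) rather than invoking the theorem as a black box; the final monomial count and estimate match the paper's in substance.
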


Here we give a quick proof of a slightly stronger version of this proposition using \cref{thm:main-finalp}.

\begin{prop}
    Let $A_1,\dots,A_m$ be $m\ge 1$ affine bases of $\FF_p^n$. Then
    \[
        |A_1+\cdots+A_m|\ge \min(p^n,\left(1+\left\lfloor \frac{m}{n}\right\rfloor\right)^n).
    \]
\end{prop}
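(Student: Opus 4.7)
The plan is to recognize this proposition as a direct application of Theorem~\ref{thm:main-finalp} with each $n_i = 1$. An affine basis of $\FF_p^n$ is exactly a set of $n+1$ vectors not contained in any affine hyperplane, which is to say a set not contained in the zero set of any polynomial of degree at most $1$. So each $A_i$ satisfies the hypothesis of \cref{thm:main-finalp} with $n_i = 1$, and $n_1 + \cdots + n_m = m$, so we may take $D = m$.

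Applying \cref{thm:main-finalp} therefore gives
\[
    |A_1 + \cdots + A_m| \ge N(p, n, m).
\]
It remains to translate this into the claimed lower bound of $\min\bigl(p^n, (1+\lfloor m/n \rfloor)^n\bigr)$. The paper notes that for $D \le (q-1)n$ one has $N(q,n,D) \ge (1+\lfloor D/n \rfloor)^n$. Thus when $m \le (p-1)n$, we immediately obtain $N(p,n,m) \ge (1+\lfloor m/n \rfloor)^n$, which in this regime equals $\min\bigl(p^n, (1+\lfloor m/n \rfloor)^n\bigr)$ since $\lfloor m/n \rfloor \le p-1$.

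When $m > (p-1)n$, every monomial with individual degree at most $p-1$ in each variable has total degree at most $(p-1)n \le m$, so all such monomials are counted by $N(p,n,m)$, giving $N(p,n,m) = p^n$. Since in this regime $1+\lfloor m/n \rfloor \ge p$, the minimum on the right-hand side is $p^n$, matching the bound. Combining both cases yields the claim.

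The only real obstacle is the one-line observation that ``affine basis'' is exactly the $n_i = 1$ case of the ``not in a low-degree hypersurface'' condition; after that, the result follows formally from \cref{thm:main-finalp} together with the bookkeeping bounds on $N(p,n,D)$ already recorded in the paper.
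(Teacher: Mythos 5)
Your proof is correct and matches the paper's argument step for step: identify an affine basis as a set avoiding all degree-$\le 1$ hypersurfaces, apply Theorem~\ref{thm:main-finalp} with $n_1=\cdots=n_m=1$ and $D=m$, and then use the stated lower bounds on $N(p,n,D)$ in the two regimes $m\le (p-1)n$ and $m>(p-1)n$. The only cosmetic difference is that you observe $N(p,n,m)=p^n$ directly in the second case rather than passing through the intermediate bound $N(p,n,m)\ge N(p,n,(p-1)n)\ge p^n$.
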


\begin{proof}
    By the definition of an affine basis, each of the sets $A_i$ is not contained in any hyperplane, i.e. the zero set of any polynomial of degree $\le 1$. Applying \cref{thm:main-finalp} with $n_1=\cdots=n_m=1$, $D=m$ gives
    \[
        |A_1+\cdots+A_m|\ge N(p,n,m),
    \]
    where $N(p,n,m)$ is the number of $n$-variable monomials of degree at most $m$ with individual degree at most $p-1$ in each variable. As noted in \cref{sec:intro}, if $m\le (p-1)n$, we have $N(p,n,m)\ge (1+\lfloor \frac{m}{n}\rfloor)^n$ (by restricting the count to monomials with degree between $0$ and $\lfloor \frac{m}{n}\rfloor$ in each variable). If $m>(p-1)n$, then we have $N(p,n,m)\ge N(p,n,(p-1)n)\ge p^n$. Thus in either case, we have the desired lower bound.
\end{proof}
Note that this proof allows us to drop the condition that $x=\frac{m}{4n}$ is a power of $2$ (or in fact an integer), relax the restriction on $m$ (essentially allowing all $m\le (p-1)n$),  and improve the lower bound by a factor of $4^n$.

\bibliographystyle{acm}
\bibliography{main}

\end{document}